\newcommand{\R}{{\mathbb{R}}}
\newtheorem{thm}{Theorem}
\newtheorem{lemma}[thm]{Lemma}
\theoremstyle{remark}
\newtheorem{remark}{Remark}
\theoremstyle{definition}
\newtheorem*{defn}{Definition}
\theoremstyle{definition}
\newtheorem*{ex}{Model Case}
\theoremstyle{definition}
\newtheorem*{claim}{Claim}
\begin{document}

%\title[three rigid inequalities for conformally flat manifolds]{Three rigid inequalities involving ADM mass, capacity 
%and mean curvature, for conformally flat manifolds}
\title[mass-capacity inequalities for conformally flat manifolds]{MASS-CAPACITY INEQUALITIES FOR CONFORMALLY FLAT MANIFOLDS WITH BOUNDARY}

\author{Alexandre Freire}
\author{Fernando Schwartz}

\begin{abstract}    In this paper we prove a mass-capacity inequality and a 
volumetric Penrose inequality for
 conformally flat manifolds, in arbitrary dimensions.  
As a by-product of the proofs, P\'olya-Szeg\"o and Aleksandrov-Fenchel inequalities for mean-convex Euclidean domains are
obtained. For each inequality, the case of equality is characterized.

%The mass-capacity inequality was used by Bray in the proof of the Riemannian Penrose inequality 
%\cites{bray01}. Bray's proof of the mass-capacity inequality uses the positive mass theorem. (In fact, this is 
%apparently the only place in \cite{bray01} where the positive mass theorem is needed.) Our
%proof for conformally flat manifolds  uses only classical arguments, and it
% holds in arbitrary dimensions.  Our result may 
 % be regarded as evidence that the Riemannian Penrose inequality for   conformally flat manifolds
 % can be obtained from arguments of classical linear elliptic theory.
\end{abstract}

\address{Department of Mathematics, University of Tennessee, Knoxville, USA}
\email{freire@math.utk.edu}
\email{fernando@math.utk.edu}
\maketitle

%%%%%%%%%%%%%%%%%%%%%%%%%%%%%%%%%%%%%%%%%%%%%
%%%%%%%%%%%%%%%%%%%%%%%%%%%%%%%%%%%%%%%%%%%%%

\section{Introduction and Main Results}

%%%%%%%%%%%%%%%%%%%%%%%%%%%%%%%%%%%%%%%%%%%%%
%%%%%%%%%%%%%%%%%%%%%%%%%%%%%%%%%%%%%%%%%%%%%

Inequalities between quasi-local quantities and global quantities have recently generated a fair amount of interest.
Among those, the spacetime Penrose inequality stands out as one of the challenging open problems in 
mathematical relativity. 

 The Riemannian version of the Penrose inequality for three-dimensional manifolds 
was proved by Huisken and Ilmanen \cite{huiskenilmanen}  using
inverse mean curvature flow (for the case of connected horizons),
and by Bray \cite{bray01}  using a conformal flow of the metric (for the general case).
The  argument of  Bray uses the 
 mass-capacity inequality in order
to prove  the monotonicity of the ADM mass along the conformal flow. 

The proof of the mass-capacity inequality
in Bray's work  relies on the positive mass theorem and a modification of the 
 reflection argument of \cite{masood}.  
A related reflection argument (implicitly involving a mass-capacity inequality) was later used 
by Bray and Lee \cite{braylee} (also using the positive mass theorem) in order to prove
the Riemannian Penrose inequality for dimensions less than eight. For
the case of a connected boundary, but now only for dimension $3$,  Bray and Miao 
 \cite{braymiao} gave a proof of the mass-capacity inequality which uses the 
 monotonicity of the Hawking mass
 along the inverse mean curvature flow \cite{huiskenilmanen} instead of the positive 
 mass theorem.  
 
 Our proof of the mass-capacity inequality (as well as the proof of the other inequalities) 
 uses only classical arguments and works
 in arbitrary dimensions.
 
 \begin{defn}
A {\it conformally flat} manifold with boundary, or CF-manifold for short,  is a manifold
$(M^{n},g), n\ge 3$, isometric to the complement of a smooth bounded open set (not necessarily connected) 
$\Omega\subset\mathbb{R}^n$ together with a 
conformally flat metric $g_{ij}=u^{\frac 4{n-2}}\delta_{ij}$, where $u>0$ is smooth, and so that:
\begin{itemize}
\item $g$ is asymptotically flat, with non-negative scalar curvature (i.e. $\Delta_{0}u\le 0$), and 
normalized so that $u\to 1$ at infinity,
\item $\Sigma=\partial \Omega$ is mean-convex  with respect to the Euclidean
metric (i.e. $H_{0}>0)$,
\item $\Sigma=\partial M$ is
 minimal with respect
 to the metric $g$ (i.e. $H_{g}=0$).
\end{itemize}
 \end{defn}

The main results of this paper are a mass-capacity inequality and a
  volumetric Penrose inequality for CF-manifolds  
 (the latter is an improved version of the inequality of \cite{schwartz}), as well as
  a P\'olya-Szeg\"o and an Aleksandrov-Fenchel inequality for
Euclidean domains.  The precise statements are the following. (See Section 2 for  definitions.)

\begin{thm} \label{main}\label{sch}
 Let $(M,g)$ be a CF-manifold as above, and  let $m$ denote its ADM mass. Let $\alpha=\min_{\Sigma}u$.
Assume either
\begin{itemize}
 \item[(i)] $\alpha\geq 2$ or 
\item[(ii)] $\alpha<2$, the oscillation of $u$ on $\Sigma$ is small and either $n<8$ or $\Sigma$ is outer-minimizing in $\Omega^c$.
\end{itemize} 

\noindent Then we have:
 \begin{itemize}
 \item[(a)] Mass-capacity inequality: $$m\ge C_g(\Sigma),$$
where $C_g(\Sigma)$ denotes the capacity of $\Sigma$ in $(M,g)$. 
\item[(b)] Volumetric Penrose inequality: $$m\geq 2\left(\frac{V_0}{\beta_n}\right)^{\frac{n-2}n},$$
where $V_{0}$ is the Euclidean volume of $\Omega$, and $\beta_{n}$ is the volume of the 
Euclidean unit $n$-ball.  
\item[(c)] Rigidity: in case (i), equality holds in (a) or (b) if and only if $g$ is the Riemannian Schwarzschild metric. In case (ii), both inequalities are strict.
\end{itemize}
\end{thm}

\begin{remark} In (ii) above, ``small" depends on $\alpha$, and is made precise in the statement of Theorem 5(IV).
\end{remark}

\begin{thm}\label{afr}
Let $\Omega\subset\R^{n}$ be a smooth bounded domain (not necessarily connected) with mean-convex boundary 
$\Sigma=\partial\Omega$.  Assume $\Sigma$ is outer-minimizing in $\Omega^c$. 
Denote by $V_{0}$ its volume, and by $ A_{0},~ H_{0}>0$ the area and  mean curvature of $\Sigma$, respectively. 
 Then we have:
\begin{itemize}
\item[(a)] P\'olya-Szeg\"o inequality:
$$C_{0}(\Sigma)\le  \frac 1{(n-1)\omega_{n-1}}\int_{\Sigma}H_0d\sigma_0, $$
with equality achieved if and only if $\Omega$ is a  round ball.
\item[(b)] Aleksandrov-Fenchel inequality: 
$$\frac 1{(n-1)\omega_{n-1}}\int_{\Sigma}H_0d\sigma_0\ge \left(\frac{A_{0}}{\omega_{n-1}}\right)^{\frac{n-2}{n-1}},$$
with equality achieved if and only if $\Omega$ is a  round ball.
\end{itemize}
\end{thm}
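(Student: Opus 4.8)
Both parts will follow from a single monotonicity statement for the \emph{weak inverse mean curvature flow} (weak IMCF) of Huisken--Ilmanen \cite{huiskenilmanen} in $\R^n$, run from $\Sigma$. Let $\{\Sigma_s\}_{s\ge 0}$ denote this flow, with proper locally Lipschitz level-set function $w\colon\R^n\setminus\Omega\to[0,\infty)$, so that $\Sigma_s=\{w=s\}$, the area grows as $|\Sigma_s|=|\Sigma_{0^+}|e^s$, and $|\nabla w|=H$ (the Euclidean mean curvature of $\Sigma_s$) on the smooth part of each level set. Set $I(s):=\int_{\Sigma_s}H\,d\sigma_0$. Differentiating along the flow, using $\partial_s(d\sigma_0)=d\sigma_0$, the evolution equation $\partial_sH=-\Delta_{\Sigma_s}(1/H)-|A|^2/H$ (the Ricci term drops since the ambient space is flat), and the pointwise bound $|A|^2\ge H^2/(n-1)$, one gets
\[
 I'(s)\ \le\ \frac{n-2}{n-1}\,I(s)
\]
weakly, an inequality that is preserved (indeed only improved) across the jump times of the weak flow. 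Moreover $I(0^+)\le\int_\Sigma H_0\,d\sigma_0$, with equality exactly when $\Sigma$ is outer-minimizing: an initial jump replaces the non-minimizing part of $\Sigma$ (where $H_0>0$) by a hypersurface of vanishing mean curvature, which strictly lowers $I$.

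For part (a) I would combine this with the variational form of capacity, $(n-2)\omega_{n-1}C_0(\Sigma)=\inf\bigl\{\int_{\R^n\setminus\Omega}|\nabla v|^2:\ v=1\text{ on }\Sigma,\ v\to0\text{ at infinity}\bigr\}=\int_{\R^n\setminus\Omega}|\nabla\varphi|^2$, and test it with the admissible competitor $\psi:=e^{-\frac{n-2}{n-1}w}$. By the coarea formula and $|\nabla w|=H$ on $\Sigma_s$,
\[
 \int_{\R^n\setminus\Omega}|\nabla\psi|^2=\int_0^\infty\Bigl(\tfrac{n-2}{n-1}\Bigr)^2 e^{-\frac{2(n-2)}{n-1}s}\,I(s)\,ds\ \le\ \Bigl(\tfrac{n-2}{n-1}\Bigr)^2 I(0^+)\int_0^\infty e^{-\frac{n-2}{n-1}s}\,ds=\tfrac{n-2}{n-1}\,I(0^+),
\]
where the inequality uses $I(s)\le I(0^+)e^{\frac{n-2}{n-1}s}$. (The exponent $\frac{n-2}{n-1}$ is dictated by the one-dimensional minimization $\min\{\int_0^\infty (f')^2e^{\frac{n-2}{n-1}s}\,ds:\ f(0)=1\}$.) Dividing by $(n-2)\omega_{n-1}$ and using $I(0^+)\le\int_\Sigma H_0\,d\sigma_0$ gives $C_0(\Sigma)\le\frac{1}{(n-1)\omega_{n-1}}\int_\Sigma H_0\,d\sigma_0$, with no outer-minimizing hypothesis needed since the weak flow absorbs a possible initial jump.

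For part (b) the extra hypothesis that $\Sigma$ is outer-minimizing means there is no jump at $s=0$, so $|\Sigma_s|=A_0e^s$ and $I(0)=\int_\Sigma H_0\,d\sigma_0$. I would then track the scale-invariant quantity $Q(s):=I(s)\,|\Sigma_s|^{-\frac{n-2}{n-1}}$; from $I'/I\le\frac{n-2}{n-1}$ and $(\log|\Sigma_s|)'=1$ one gets $Q'\le 0$, and since the weak flow becomes round at infinity, $Q(s)\to(n-1)\,\omega_{n-1}^{1/(n-1)}$. Hence $(n-1)\,\omega_{n-1}^{1/(n-1)}\le Q(0)=\bigl(\int_\Sigma H_0\,d\sigma_0\bigr)A_0^{-(n-2)/(n-1)}$, which rearranges to $\frac{1}{(n-1)\omega_{n-1}}\int_\Sigma H_0\,d\sigma_0\ge(A_0/\omega_{n-1})^{(n-2)/(n-1)}$. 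In either part, equality forces $|A|^2\equiv H^2/(n-1)$ along the flow, hence each $\Sigma_s$ is totally umbilic and therefore a round sphere; together with the absence of jumps (for (a), equality in $I(0^+)\le\int_\Sigma H_0\,d\sigma_0$ excludes non-outer-minimizing bridges) and the fact that IMCF emanating from a round sphere is a family of concentric spheres, this forces $\Omega$ to be a round ball, and in case (a) one moreover notes that the competitor $\psi$ must coincide with the capacity potential $\varphi$.

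The routine part is the ODE above; the real work is the rigorous use of the weak flow. One must know that a merely mean-convex $\Sigma$ (with $\Omega$ possibly disconnected) admits a weak IMCF at all; that $I'(s)\le\frac{n-2}{n-1}I(s)$ and the monotonicity of $Q(s)$ genuinely persist through the jump times; that the coarea identity $\int_{\R^n\setminus\Omega}|\nabla\psi|^2=\int_0^\infty (f'(s))^2 I(s)\,ds$ holds and $\psi$ is an admissible competitor for the (only Lipschitz) level-set function $w$; that $I(0^+)\le\int_\Sigma H_0\,d\sigma_0$ with the sharp criterion above; and that $\Sigma_s$ rounds out at infinity with the stated limit of $Q$. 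All of this draws on the Huisken--Ilmanen existence, regularity and asymptotics theory, and the equality discussion must be carried out inside the same framework.
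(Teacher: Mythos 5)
Your proposal follows essentially the same route as the paper: both rely on Huisken--Ilmanen weak IMCF, the differential inequality $\frac{d}{dt}\int_{\Sigma_t}H\,d\sigma\le\frac{n-2}{n-1}\int_{\Sigma_t}H\,d\sigma$ (coming from the $H$-evolution and $|A|^2\ge H^2/(n-1)$, with the total mean curvature not increasing at jump times), a variational capacity estimate over test functions built from the IMCF level-set function for part (a), and monotonicity of the scale-invariant quantity $\int_{\Sigma_t}H\,d\sigma\cdot|\Sigma_t|^{-(n-2)/(n-1)}$ together with asymptotic roundness for part (b), with rigidity in both parts traced back to umbilicity of the level sets. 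The only cosmetic difference is that for (a) you insert the explicit competitor $f(s)=e^{-\frac{n-2}{n-1}s}$ directly, whereas the paper first solves the one-dimensional variational problem exactly (giving $\bigl(\int_0^\infty w^{-1}\bigr)^{-1}$ via Cauchy--Schwarz) and then applies the exponential bound on $w(t)$; the two give the same final estimate.
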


The proof of the above results follows from Theorem \ref{two}  below, which relies on classical arguments 
including
Huisken and Ilmanen's inverse mean curvature flow for arbitrary dimensions \cites{huiskenilmanen,hi08}.
It is important to note that $\Sigma$ is not assumed to be connected in either of the above results.\\

The novelty in part (a) of Theorem \ref{main} is that it does not use the positive mass theorem and applies in all
dimensions.
Compared to \cite{schwartz}, the novelty in part 
(b) of  Theorem \ref{sch} is that it is a sharp estimate and includes a rigidity statement. 
In the case of convex domains, part (a) of Theorem \ref{afr} is related to a classic result of 
P\'olya-Szeg\"o \cite{szego}.  Our result for mean-convex domains is more general.  Part
(b) of Theorem \ref{afr} is included in the family of 
classical Aleksandrov-Fenchel inequalities for the cross-sectional volumes of convex domains. 
These were generalized to the case of star-shaped $k$-convex domains in \cite{guanli}. 
Our result for outer-minimizing, mean-convex domains (or 1-convex, $k=1$) does not require the domain 
to be star-shaped (or even connected), hence it is more general.  \\

{\bf Acknowledgments.} We are indebted to Jeff Jauregui for pointing out a mistake in the rigidity statement in part (III) of theorem 5, in an earlier version of the paper.  
We would like to thank the anonymous referee, whose remarks led to improvements in the rigidity
proofs.

%%%%%%%%%%%%%%%%%%%%%%%%%%%%%%%%%%%%%%%%%%%%%
%%%%%%%%%%%%%%%%%%%%%%%%%%%%%%%%%%%%%%%%%%%%%

\section{Preliminaries}

%%%%%%%%%%%%%%%%%%%%%%%%%%%%%%%%%%%%%%%%%%%%%
%%%%%%%%%%%%%%%%%%%%%%%%%%%%%%%%%%%%%%%%%%%%%

Let $(M^n,g), n\ge 3$ be a complete, non-compact Riemannian manifold with boundary $\Sigma=\partial M$. Here, we don't assume $\Sigma$ is connected. For simplicity, let us assume $M$ has only one end, $\mathcal{E}$. 
Such a manifold is said to be {\it asymptotically flat} if, outside a compact set, 
 $(M,g)$ is diffeomorphic to the complement of a ball in Euclidean space, and in the coordinates
 given by this diffeomorphism the metric has the asymptotic decay
$$|g-\delta|=O(|x|^{-p}),\quad |\partial g|=O(|x|^{-p-1}),\quad |\partial^2g|=O(|x|^{-p-2}),$$
where $p>\frac{n-2}2.$  Furthermore, we require $(M,g)$ to have
  integrable scalar curvature $\int_M|R_g|dV_g<\infty$. 
  
   For these manifolds the ADM mass does not
  depend on the choice of asymptotically flat coordinates and is
  defined by
  \begin{equation}\label{admmass}
  m=m_{ADM}(g)=\frac 1{2(n-1)\omega_{n-1}}\lim_{r\rightarrow \infty} \int_{S_r}\sum_{i,j}(\partial_jg_{ij}-\partial_ig_{jj})\nu^jd\sigma^0_r.
  \end{equation}
Here, $S_r$ is a Euclidean coordinate sphere, $d\sigma^0_r$ is Euclidean surface area.\\

There are several results in the literature which give lower bounds for the ADM mass in terms of geometric
quantities.  For example,
the celebrated {\it positive mass theorem} \cites{schoenyau, witten} (valid for asymptotically flat manifolds {\it without} boundary) asserts
 that if the scalar curvature of $g$ is non-negative and either $3\leq n\leq 7$ or $M$ is spin, then 
 $$m\geq 0,$$ and $m=0$ if and only if the manifold is Euclidean space. \\

Another well-known inequality is the {\it Riemannian Penrose inequality}, which
can be thought of as a refinement of the positive mass theorem.  It asserts that if $M$ has non-negative
scalar curvature and contains a compact
 outermost minimal hypersurface $\Sigma$, then
$$m\geq \frac 12\left(\frac{|\Sigma|}{\omega_{n-1}}\right)^{\frac{n-2}{n-1}},$$
where $|\Sigma|$ denotes the $g$-area of $\Sigma$ and $\omega_{n-1}$ is the
volume of the $(n-1)$-dimensional sphere.  Rigidity also holds for the Riemannian
Penrose inequality.  More precisely,
 equality holds above if and only if the manifold is a {\it Riemannian Schwarzschild manifold} of mass $m>0$
$$g_{ij}=\left(1+\frac m{2r^{n-2}}\right)^{\frac 4{n-2}}\delta_{ij},$$
where $g$ is defined outside the ball of radius $R_s:=(\frac m2)^{\frac 1{n-2}}.$
This inequality was proved in \cite{huiskenilmanen} for $n=3$ and connected $\Sigma$ (using inverse mean-curvature flow and monotonicity of the Hawking mass), and in \cite{bray01} for $n=3$ without the connectedness assumption.
The approach of \cite{bray01} was generalized for $3\leq n\leq 7$ in \cite{braylee}, although the rigidity 
statement requires the extra hypothesis that the manifold
be spin. \\

It is natural to wonder if there is a proof of the Riemannian Penrose inequality
in the general conformally flat case that uses only properties of 
superharmonic functions in $\mathbb{R}^n$ (see \cite{brayiga}).   Our work provides  evidence 
in this direction.\\

In what follows we will be using the notion of capacity of hypersurfaces.
The precise definition  is the following.

\begin{defn}
Let $(M,g)$ be a complete, non-compact Riemannian manifold with compact boundary $\Sigma$ and one end ${\mathcal{E}}$.
The {\it capacity} of a hypersurface $\Sigma\subset (M,g)$ is  
$$C_g(\Sigma)=\inf_{\varphi\in M_0^{1}} \left\{\frac 1{(n-2)\omega_{n-1}}\int_M|\nabla_g\varphi|_g^2dV_g\right\},$$
where $M_{0}^{1}$ denotes the set of all smooth functions on $M$ which are exactly
$0$ on $\Sigma$ and  approach
$ 1$ towards infinity in the end ${\mathcal{E}}.$  
We denote by $C_0(\Sigma)$  the Euclidean capacity of a hypersurface   
$\Sigma=\partial \Omega\subset \mathbb{R}^{n}.$ 
\end{defn}

\begin{remark}  The normalization constant of the above definition is chosen so that $C_0(S_R)=R^{n-2}$, where
  $S_R=\partial \mathbb{B}_R$ in $\mathbb{R}^n$.
 \end{remark}
 
\begin{remark} The infimum of the definition is attained by the unique $g$-harmonic function in $M_{0}^{1}$. If the
ambient manifold is Euclidean space, it follows that the harmonic function which realizes the infimum
has the asymptotic expansion
$$\varphi(x)=1-\frac{C_0(\Sigma)}{|x|^{n-2}}+O(|x|^{1-n})\mbox{ as }x\rightarrow \infty.$$
 \end{remark}

\begin{remark}  Changing the boundary conditions we could also define (for $a\neq b$):
$$C^{(a,b)}_g(\Sigma)=\inf_{\varphi\in M_{a}^{b}} \left\{\frac 1{(n-2)\omega_{n-1}}\int_M|\nabla_g\psi|_g^2dV_g\right\},$$
where $M_{a}^{b}$ is defined as above.  Since the map $\psi\mapsto \frac{a-\psi}{a-b}$ defines a bijection
 $M_{a}^{b}\to M_{0}^{1}$ which scales the integral of the square of the gradient by a constant, it follows that
 $C_g^{(a,b)}(\Sigma)=(a-b)^2C_g(\Sigma)$.\\
\end{remark}

In this paper we are interested in the case when $(M,g)$ is a CF-manifold.  Recall from its definition 
 that this means that $M$ is
 diffeomorphic to $\Omega^c:=\mathbb{R}^n\setminus \Omega$, where $\Omega \subset \mathbb{R}^n$  is a smoothly bounded domain (not necessarily connected), and  $g$ is conformal to the Euclidean metric.  That is,
  $g_{ij}=u^{\frac 4{n-2}}\delta_{ij}$ with  $u>0$, and $u\to1$ at Euclidean infinity.\\

     In what follows we denote $\Sigma=\partial \Omega$.  For reasons that will 
     become clear below,  it is 
     convenient to prove our
     results for CF-manifolds which are also {\it harmonically flat at infinity.}
   In our case, this means that the superharmonic function $u$ also satisfies
 $\Delta_0 u=0$  outside a sufficiently large Euclidean ball.  Using an expansion in 
 spherical harmonics we get that
$$u=1+\frac m{2r^{n-2}}+O(r^{1-n}),\quad u_r=-\frac{(n-2)m}2 r^{1-n}+O(r^{-n}),\quad m=m_{ADM}(g).$$ 

  It is well known that 
     changing $m$ by an arbitrarily small amount (and $g$ by a point-wise ratio arbitrarily close to $1$),
      one may assume $g$ is harmonically flat at infinity.  (See e.g. \cite{schoenyau}.)
      For our purposes it is useful to  construct an explicit approximation
      by such metrics.   This will play 
  a role in proving some of the rigidity statements of the main theorem, as we see 
  below.  The construction we present is inspired by a construction of 
  Miao \cite{miao04}.

\begin{lemma}[Approximation by metrics that are harmonically flat at infinity]\label{approxi}
Let $(M,g)$ be a CF-manifold of mass $m$ as above, i.e. $M$ is isometric to
$\Omega^c\subset \R^n$ with
$g=u^{4/(n-2)}\delta_{ij}$.  Then,
there exists a sequence of smooth superharmonic
functions $\{u_k\}$ defined on $\Omega^c\subset\R^n$ so that for $k$ large enough
\begin{enumerate}
\item[(i)] $u_k$ is harmonic outside $\mathbb{B}_{k+\frac1{k}}$, $u_k\to1$ at infinity,
\item[(ii)] $u_k\to u$ uniformly
on compact
subsets of $M$,
\item[(iii)] $\lim_{k\to\infty}m_k=m$, where $m_k$ is the ADM-mass of $u_k^{4/(n-2)}\delta_{ij}.$
%\item[(iv)] $\limsup_{k\to\infty}C_{g_k}(\Sigma)=C_g(\Sigma)$, where 
%$C_{g_k}(\Sigma)$ is the capacity of the boundary of $M$ with respect to the metric
 %$u_k^{4/(n-2)}\delta_{ij}.$
\end{enumerate}
\end{lemma}

\begin{proof}
For fixed $k,$ let  $v_k$ be the unique solution to the problem
\begin{equation*}
\left\{
\begin{tabular}{rl}
$\Delta v =0$ &  in  $\mathbb{B}_k^c$\\
$v=u$ & on $ \partial \mathbb{B}_k$\\
$v\to 1$ & as  $|x|\to\infty.$
\end{tabular}
\right.
\end{equation*}
Then, it follows that the function 
\begin{equation*}
\tilde u_k(x)=\left\{
\begin{tabular}{rl}
$u(x)$ &  in  $\mathbb{B}_k$\\
$v_k(x)$ & in $ \mathbb{B}_k^c$\\
\end{tabular}
\right.
\end{equation*}
is weakly superharmonic.  We now define
$u_k=\tilde u_k*\phi_{1/k}$, where $\phi_{1/k}$ is a standard mollifier
with support inside $\mathbb{B}_{1/k}$.  It follows that $u_k$ is superharmonic by the
mean value property. (We have used $v_k\le u$ in $\mathbb{B}_k^c$). This
proves (i). (ii) follows from the construction.

To prove (iii), we use that the
  decay estimates for $u$ coming from $g$ being asymptotically flat
  give uniform decay estimates for the $u_k$'s and their derivatives.
  (To obtain this we use the superharmonicity of $u$ and the mean value property).
  Using these uniform estimates it follows  that the surface integrals in the definition
  of mass from equation \eqref{admmass}, evaluated at $g_k=u_k^{4/(n-2)}\delta_{ij}$, 
  converge to the expression evaluated at $g$.  In other workds,  $m_k\to m$ as $k\to\infty$.
\end{proof}

\begin{remark}
For CF-manifolds which are harmonically flat at infinity (thus for any CF-manifold)  the positive mass theorem follows easily for all $n\geq 3$. (Note that  rigidity of the PMT
for CF-manifolds does not follow directly from rigidity in the harmonic-at-infinity case.)
\end{remark}

Indeed,   the transformation law
formula for scalar curvature under conformal deformations  gives that the scalar
curvature of $g_{ij}=u^{\frac 4{n-2}}\delta_{ij}$, denoted by $R_{g}$,
is given by
\begin{equation}\label{sctl}
R_g=u^{-\frac{n+2}{n-2}}\left(-\frac{4(n-1)}{n-2}\Delta_0u+R_{\delta}u\right).
\end{equation}
(Naturally, here $R_{\delta}\equiv 0$.) In particular, we obtain that $R_g\geq 0\Leftrightarrow \Delta_0u\leq 0$. 
The ADM integrand is easily computed in this case:
$$\sum_{i,j}(\partial_jg_{ij}-\partial_ig_{jj})\nu^j=-\frac{4(n-1)}{n-2}u^{\frac{6-n}{n-2}}u_r,$$
and since $u\rightarrow 1$ at infinity, we obtain
$$\int_{S_{\rho}}u^{\frac{6-n}{n-2}}u_rd\sigma^0_{\rho}\sim \int_{S_{\rho}}u_rd\sigma^0_{\rho}=\int_{\mathbb{B}_{\rho}}\Delta_0udx\leq 0.$$
Thus, $m\geq 0$, with equality if and only if $u$ is positive harmonic on $\mathbb{R}^n$ with $u\to 1$ at
infinity, i.e. $u\equiv 1$.

%%%%%%%%%%%%%%%%%%%%%%%%%%%%%%%%%%%%%%%%%%%%%
%%%%%%%%%%%%%%%%%%%%%%%%%%%%%%%%%%%%%%%%%%%%%

\section{Model Case and Main Theorem}

%%%%%%%%%%%%%%%%%%%%%%%%%%%%%%%%%%%%%%%%%%%%%
%%%%%%%%%%%%%%%%%%%%%%%%%%%%%%%%%%%%%%%%%%%%%

The motivation for this note is to investigate whether the mass-capacity inequality holds for 
CF-manifolds  in all dimensions. The following transformation formula for the Laplacian plays a key role.

\begin{lemma} \label{tran} Let $g=u^{\frac4{n-2}}\delta$ and $f\in C^{\infty}(M).$  Then
$\Delta_gf=u^{-\frac{n+2}{n-2}}(\Delta_0(uf)-f\Delta_0u).$  In particular, if $\Delta_0u=0$, then
$ \Delta_0(uf)=0$ if and only if  $\Delta_g f=0.$
\end{lemma}

 We use the Lemma in the following main example.

\begin{ex} Our prototypical example is the so-called Riemannian Schwarzschild manifold. (Compare with
Theorem 9 of \cite{bray01}.) It is constructed
as follows.  For $R_s>0$, denote $m=2R_s^{n-2}$, and define on $\mathbb{B}_{R_s}^c=
\mathbb{R}^n\setminus \mathbb{B}_{R_s}$
the function
\begin{equation}\label{schwarz}
u=1+\left(\frac{R_s}r\right)^{n-2}=1+\frac m{2}r^{2-n}.
\end{equation}
Note that $u$ is actually defined and harmonic
($\Delta_0u=0$) on $\mathbb{R}^n\setminus\{0\}$. \\

Now define
$$\varphi=\frac{1-(\frac{R_s}r)^{n-2}}{1+(\frac{R_s}r)^{n-2}}.$$

Then $\Delta_0(u\varphi)=0$, so $\Delta_g\varphi=0$ by  Lemma \ref{tran} above. Moreover $\varphi\rightarrow 1$ as 
$r\rightarrow \infty$, and $\varphi_{|\Sigma}=0$ for $\Sigma=\partial\mathbb{B}_{R_s}$. Thus, by direct integration
we obtain
$$C_g(\Sigma)=\frac 1{\omega_{n-1}(n-2)}\int_{\mathbb{B}_{R_s}^{c}}|\nabla_g\varphi|_g^2dV_g=\frac 1{n-2}\int_{R_s}^{\infty}u^2\varphi_r^2r^{n-1}dr=m.$$
That is, the equality case of the mass-capacity inequality is achieved by the Riemannian Schwarzschild manifold;
this should be the extremal case for the inequality and it is our motivational starting point. \\
\end{ex}

Introduce the notation: $\alpha=\min_{\Sigma}u$, $\alpha_1=\max_{\Sigma}u$, and $\omega=\alpha_1-\alpha$ is the oscillation of $u$ on $\Sigma$.

%In view of the above example we now generalize the notion of Riemannian Schwarzschild metric to general euclidean domains, 
%not necessarily the 
%complement of a round ball.

%\begin{defn}
%Let $\Omega^c\subset \mathbb{R}^n$. The {\it harmonic} metric of $\Omega^{c}$
%is the asymptotically flat, conformally flat metric $g_s=u_s^{\frac 4{n-2}}\delta$,
%where $u_s>0$ is a positive function on $\Omega^{c}$ which is
%uniquely determined by the following conditions:
%\begin{itemize}
%\item $\Delta_0u_s=0$ (thus $R_{g_s}=0$),
%\item $u_s\to 1$  as $x\to \infty,$
%\item $(u_s)_{\nu}=-\frac{n-2}{2(n-1)}u_sH_0$ on $\Sigma=\partial \Omega$, or equivalently, $H_{g_s}(\Sigma)=0.$
%\end{itemize}
%\end{defn}

%(Note that harmonic metrics are CF-manifolds whenever $\Sigma=\partial\Omega$ is Euclidean mean-convex.)
We are now ready to state our main result.  

\begin{thm}\label{two} Let $n\ge 3$ and $\Omega\subset\mathbb{R}^n$ be a smoothly bounded 
domain with boundary $\Sigma=\partial \Omega$, not necessarily connected. Let $(M,g)$ be isometric
to   a conformally flat metric $g_{ij}=u^{\frac 4{n-2}}\delta_{ij}$ on $\Omega^c$ which is asymptotically 
flat with ADM mass $m$.  (Here    $u>0$ and $u\to1$ towards  infinity.)
Assume  further that $(M,g)$ has non-negative scalar curvature $R_{g}\ge0$.  Then
\begin{itemize}
\item[(I)] If $\Sigma$ is Euclidean mean-convex ($H_0>0$) and $g$-minimal ($H_g=0$), then
$$C_0(\Sigma)< C_g(\Sigma)\leq C_0(\Sigma)+\frac m2.$$
Equality occurs in the  second inequality if and only if $u$ is harmonic.\\

\item[(II)](Euclidean estimate.) Assume (i) $H_0>0$ on $\Sigma$ and (ii) either the solution of inverse mean curvature flow in $\Omega^c$ with initial hypersurface $\Sigma$ is smooth for all $t>0$, or $\Sigma$ is outer-minimizing in $\Omega^c$, or $n<8$. Then: 
$$C_0(\Sigma)\leq \frac 1{(n-1)\omega_{n-1}}\int_{\Sigma}H_0d\sigma_0.$$
Equality holds if and only if $\Sigma$ is a round sphere.\\

\item[(III)] Let $\alpha=\min_{{\Sigma}}u$.
 Under the same assumptions on $\Sigma$ as in (I), we have:
$$\frac 1{(n-1)\omega_{n-1}}\int_{\Sigma}H_0d\sigma_0\leq \frac m{\alpha}.$$
Equality holds if and only if $u$ is harmonic and constant on $\Sigma$ (and, in this case, $\alpha \geq 2$.)(Note that by Lemma \ref{fs} below, $\alpha > 1$ always.)\\

\item[(IV)] Under the same assumptions on $\Sigma$ as in (I) and (II) above, assume $\alpha <2$ and either
$$\alpha\geq 1+\frac{\sqrt 2}2\mbox{ and }\omega <1-\frac{\alpha}2$$
$$\mbox{or }\alpha<1+\frac{\sqrt 2}2\mbox{ and }\omega< 2-\alpha-\frac 1{2\alpha}.$$
Then:
$$C_0(\Sigma)<\frac m2.$$\\

\item[(V)] (Euclidean estimate.) Assume $H_0>0$ on $\Sigma$, and $\Sigma$ is outer-minimizing in $\R^n$ with area $A$. Then: 
$$ \frac1{(n-1)\omega_{n-1}}\int_{\Sigma}H_0d\sigma_0\ge \left(\frac{A}{\omega_{n-1}}\right)^{\frac{n-2}{n-1}}.$$
Equality holds if and only if $\Sigma$ is a round sphere.\\

\end{itemize}

\end{thm}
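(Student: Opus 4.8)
The plan is to prove the five items in roughly increasing order of difficulty: the purely Euclidean estimates (II) and (V) first, then the maximum-principle estimates (I) and (III), and finally (IV), which is the technical heart.

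For (II) I would run the weak inverse mean curvature flow of Huisken--Ilmanen (valid in all dimensions by \cite{hi08}) starting from $\Sigma$ — or rather from its outward minimizing hull, which only decreases $\int_\Sigma H_0\,d\sigma_0$ since the additional pieces are minimal — obtaining leaves $\Sigma_t$ with $|\Sigma_t|_0=|\Sigma_0|_0 e^t$. The standard first variation together with the Schwarz inequality $|A_0|^2\ge H_0^2/(n-1)$ gives $\frac{d}{dt}\int_{\Sigma_t}H_0\,d\sigma_0\le\frac{n-2}{n-1}\int_{\Sigma_t}H_0\,d\sigma_0$. I would then test the Euclidean capacity with a function $\varphi=\psi(t)$ that is constant on each leaf; since the level-set function satisfies $|\nabla_0 t|_0=H_0$ a.e., the coarea formula gives $\int_{\Omega^c}|\nabla_0\varphi|_0^2\,dx=\int_0^\infty\psi'(t)^2\big(\int_{\Sigma_t}H_0\,d\sigma_0\big)\,dt$, and choosing an appropriate profile $\psi$ with $\psi(0)=0$, $\psi(\infty)=1$, together with the above bound, yields $C_0(\Sigma)\le\frac1{(n-1)\omega_{n-1}}\int_\Sigma H_0\,d\sigma_0$. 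Equality forces the Schwarz inequality to be an identity along the whole flow, so every leaf is umbilic and $\Sigma$ is a round sphere. For (V), the same flow together with $\frac{d}{dt}|\Sigma_t|_0^{(n-2)/(n-1)}=\frac{n-2}{n-1}|\Sigma_t|_0^{(n-2)/(n-1)}$ shows that $\big(\int_{\Sigma_t}H_0\,d\sigma_0\big)\,|\Sigma_t|_0^{-(n-2)/(n-1)}$ is non-increasing; evaluating its limit as $t\to\infty$ on the asymptotically round leaves (Huisken--Ilmanen's blow-down analysis) gives the round-sphere value $(n-1)\omega_{n-1}^{1/(n-1)}$, while the outer-minimizing hypothesis rules out a jump at $t=0$; this gives (V) with round-sphere rigidity.

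For (III) and (I) only the maximum principle and the conformal Laplacian identity of Lemma \ref{tran} enter. Since $u$ is superharmonic with $u=1+\frac m2 r^{2-n}+O(r^{1-n})$, the divergence theorem on $\{|x|<R\}\cap\Omega^c$ gives, in the limit $R\to\infty$, $\int_\Sigma(-u_\nu)\,d\sigma_0\le\frac{(n-2)m}{2}\,\omega_{n-1}$; substituting the $g$-minimality relation $-u_\nu=\frac{n-2}{2(n-1)}uH_0\ge\frac{n-2}{2(n-1)}\alpha H_0$ yields (III), with equality forcing $\Delta_0 u=0$ and $u\equiv\alpha$ on $\Sigma$ — a harmonic metric whose potential is constant on $\partial\Omega$ is an overdetermined (Serrin-type) problem, forcing $\Omega$ to be a ball, i.e. $g$ Schwarzschild, for which $\alpha=2$. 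For (I): by Lemma \ref{tran} the function $u\varphi_g$, with $\varphi_g$ the $g$-harmonic capacitary potential and $0\le\varphi_g\le1$, satisfies $\Delta_0(u\varphi_g)=\varphi_g\,\Delta_0 u\le0$, hence is superharmonic, vanishes on $\Sigma$ and tends to $1$; comparing it with the harmonic $\varphi_0$ having the same boundary data gives $u\varphi_g\ge\varphi_0$, and matching the $r^{2-n}$-coefficients (using $u\varphi_g\sim1+\frac{m/2-C_g(\Sigma)}{r^{n-2}}$ and $\varphi_0\sim1-\frac{C_0(\Sigma)}{r^{n-2}}$, together with the fact that a nonnegative superharmonic function decaying to $0$ has nonnegative leading coefficient) gives $C_g(\Sigma)\le C_0(\Sigma)+\frac m2$, with equality iff $u\varphi_g\equiv\varphi_0$, i.e. $\Delta_0u=0$. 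The strict inequality $C_0(\Sigma)<C_g(\Sigma)$ follows by using $\varphi_g$ as a test function for $C_0$ together with $C_g(\Sigma)=\frac1{(n-2)\omega_{n-1}}\int u^2|\nabla_0\varphi_g|_0^2\,dx$ and $u>1$ in the interior.

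The main obstacle is (IV), whose equality case must be exactly Schwarzschild and hence cannot be read off from (II)--(III) (that route only gives $C_0(\Sigma)\le m/\alpha$). Here I would run the weak inverse mean curvature flow in the metric $g$ starting from the $g$-minimal $\Sigma$, producing leaves $\Sigma_s$ with $|\Sigma_s|_g=|\Sigma|_g e^s$, and again test the \emph{Euclidean} capacity with a function constant on these leaves. A conformal computation ($dx=u^{-2n/(n-2)}dV_g$, $|\nabla_0 t|_0^2=u^{4/(n-2)}H_g^2$ along the $g$-flow) gives $\int_{\Omega^c}|\nabla_0\varphi|_0^2\,dx=\int_0^\infty\psi'(s)^2 G(s)\,ds$, where $G(s)=\int_{\Sigma_s}u^{-2}H_g\,d\sigma_g=\int_{\Sigma_s}\big(H_0+\tfrac{2(n-1)}{n-2}\partial_\nu\log u\big)\,d\sigma_0$, so $C_0(\Sigma)\le\big((n-2)\omega_{n-1}\int_0^\infty G(s)^{-1}ds\big)^{-1}$; in the Schwarzschild case the leaves are Euclidean spheres, hence level sets of $\varphi_0$, so the inequality is sharp there, and a direct integration gives $\int_0^\infty G(s)^{-1}ds=\frac2{(n-2)\omega_{n-1}m}$. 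The crux is the sharp bound $\int_0^\infty G(s)^{-1}ds\ge\frac2{(n-2)\omega_{n-1}m}$ in general, which I expect to follow from a Geroch-type monotonicity along the $g$-flow (using $R_g\ge0$, the Gauss equation and $|A_g|^2\ge H_g^2/(n-1)$) together with convergence of the relevant quantity to the ADM mass $m$ as $s\to\infty$; the rigidity in this monotonicity, combined with the equality analyses above, then pins down Schwarzschild. Making the Geroch-type argument rigorous in the conformally-flat-with-boundary setting, and carefully handling the weak flow — jump times, the distinction between $\Sigma$ and its outward minimizing hull in both the Euclidean and the $g$ metric, and the low-regularity first-variation identities — is where most of the work will lie.
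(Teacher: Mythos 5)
Parts (II) and (V) follow the paper essentially verbatim: IMCF, the one-dimensional variational reduction for capacity, the differential inequality $\frac{d}{dt}\int_{\Sigma_t}H_0 \le \frac{n-2}{n-1}\int_{\Sigma_t}H_0$ from $|A_0|^2\ge H_0^2/(n-1)$, and the blow-down analysis for (V). Part (I) you prove by a genuinely different (and workable) route: the paper tests $C_g$ with $v/u$ and integrates by parts, whereas you compare the superharmonic function $u\varphi_g$ with the Euclidean potential $\varphi_0$ and read off the inequality from the sign of the leading asymptotic coefficient; both rest on Lemma~\ref{tran} and the strict inequality comes from the same $u>1$ observation. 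Your inequality in (III) also matches the paper's computation via the divergence theorem and the boundary condition from~\eqref{mctl}.

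There are, however, two genuine gaps. First, your rigidity argument for (III) is not correct as stated. After showing $\Delta_0 u=0$ and $u|_\Sigma\equiv\alpha$, you invoke a ``Serrin-type'' overdetermined problem to force $\Omega$ to be a ball. But the Neumann data here is $u_\nu=-\tfrac{(n-2)\alpha}{2(n-1)}H_0$, which is \emph{not} constant unless $H_0$ is already constant on $\Sigma$ --- so the Serrin/Reichel rigidity for exterior domains does not apply, and there is no off-the-shelf overdetermined theorem covering this boundary condition. The paper instead closes the loop via an interdependence you missed: the \emph{inequality} of (IV) is proved independently, and then $\alpha\le 2$ (from (II)) together with $\alpha\ge 2$ (from the inequality in (IV) combined with $(\alpha-1)C_0(\Sigma)=m/2$) forces $\alpha=2$, which pushes you into the equality case of (II), whose IMCF rigidity then gives the sphere.

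Second, and more seriously, part (IV) is not proved. You propose to run IMCF in the metric $g$ and derive the bound from a ``Geroch-type monotonicity (using $R_g\ge 0$, the Gauss equation and $|A_g|^2\ge H_g^2/(n-1)$).'' That monotonicity is a dimension-three phenomenon: the Geroch/Hawking-mass argument uses Gauss--Bonnet on the leaves and fails for $n\ge 4$ (this is exactly why the IMCF proof of the Penrose inequality does not extend to higher dimensions). You acknowledge the step is speculative (``which I expect to follow''), so this is a hole at the technical heart of the theorem, and it cannot be repaired along the lines you indicate. The paper's actual proof of (IV) is completely different and elementary: it tests the Euclidean capacity with $f\circ u$ for a convex, decreasing $f$ with $f(1)=1$, $f'(1)=-1$, $f(\alpha)=\epsilon$, uses that $\Delta_0(f\circ u)=(f''\circ u)|\nabla_0 u|^2+(f'\circ u)\Delta_0 u\ge 0$ to integrate by parts, discards the boundary term via the $g$-minimality condition, and reads off $m/2$ from the flux at infinity as $\epsilon\to 0$. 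The rigidity of (IV) is then obtained by carefully passing to the limit $f_\epsilon\to g$ (a piecewise-linear profile), showing $g''\circ u=0$ a.e.\ on $\{u\le\alpha\}$, deducing $\alpha=2$, and then appealing to the already-established rigidity of (III). None of that structure --- in particular the cyclic dependence (IV)-inequality $\Rightarrow$ (III)-rigidity $\Rightarrow$ (IV)-rigidity --- is present in your outline, and it is precisely what makes the argument close.
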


\begin{remark}
If equality holds in (III), the harmonic function $v=\frac{u-\alpha}{1-\alpha}$ achieving the $inf$ in the definition of $C_0(\Sigma)$ satisfies on $\Sigma$: $v_{\nu}=cH_0, c=(n-2)\alpha/2(n-1)(\alpha-1)$. Whether this can happen in cases other than the sphere is unclear (and is a purely Euclidean question.)
\end{remark}

%%%%%%%%%%%%%%%%%%%%%%%%%%%%%%%%%%%%%%%%%%%%%
%%%%%%%%%%%%%%%%%%%%%%%%%%%%%%%%%%%%%%%%%%%%%

\section{Proof of  Theorem \ref{two}} 

%%%%%%%%%%%%%%%%%%%%%%%%%%%%%%%%%%%%%%%%%%%%%
%%%%%%%%%%%%%%%%%%%%%%%%%%%%%%%%%%%%%%%%%%%%%

Notice that all non-strict inequalities in Theorem \ref{two} are 
closed conditions under
 $C^\infty$ convergence of metrics on $M$.
 Therefore, using the fact that any CF-manifold
 can be approximated, in this topology, by metrics that are
harmonically flat  at infinity (by 
 Lemma \ref{approxi}),  it suffices to prove the non-strict inequalities of
  Theorem \ref{two}
 for metrics that are harmonically flat
 at infinity.  In other words, without loss of generality we may assume that for $(M,g)$ 
with $g=u^{4/(n-2)}\delta_{ij}$,  there exists
 $R_0>0$ large enough so that
$$\Delta_0u=0\mbox{ for }r>R_0,\quad u=1+\frac m{2r^{n-2}}+O\left(\frac 1{r^{n-1}}\right),
\quad u_r=-\frac{(n-2)m}{2r^{n-1}}+O\left(\frac 1{r^n}\right),$$
 where $m$ is the ADM-mass of $(M,g)$.
 
 \begin{remark} For proving the {\bf rigidity} statements as well as
  the strict inequality 
 of (I) in the Theorem we may require additional justifications, as we see below. 
 \end{remark}

The first (strict) inequality of part (I) of Theorem \ref{two} follows from the following lemma of 
\cite{schwartz}.

\begin{lemma}[\cite{schwartz}]\label{fs} Assume $u>0$ and $\Delta_0 u\leq 0$ in $\Omega^c=\mathbb{R}^n\setminus \Omega$, with $\Sigma=\partial \Omega$ mean-convex for the euclidean metric ($H_0>0$) and {\it minimal} for the metric $g=u^{\frac 4{n-2}}\delta$ ($H_g=0$), where $u>0,u\rightarrow 1$ at infinity. Then $u> 1$ on $\Omega^c$. 
\end{lemma}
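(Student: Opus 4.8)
The plan is to set up a comparison argument: introduce the auxiliary function $v = u - 1$ on $\Omega^c$, which satisfies $\Delta_0 v \le 0$, tends to $0$ at infinity, and is continuous up to the boundary. First I would establish that $v \ge 0$ everywhere. Since $v$ is superharmonic on the exterior domain and vanishes at infinity, by the minimum principle for superharmonic functions on unbounded domains any interior minimum of $v$ would be attained on $\Sigma$ (the possibility of a ``minimum at infinity'' is harmless here since $v\to 0$). So it suffices to rule out $v < 0$ somewhere, which — combined with the boundary analysis below — forces $v > 0$ in the interior by the strong minimum principle.

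The heart of the argument is the boundary condition. Unwinding the minimality hypothesis $H_g = 0$ using the conformal transformation law for mean curvature, $H_g = u^{-\frac{2}{n-2}}\big(H_0 + \frac{2(n-1)}{n-2}\,u^{-1} u_\nu\big)$ with $\nu$ the outward Euclidean unit normal to $\Omega^c$ (i.e. pointing into $\Omega$), the condition $H_g = 0$ is equivalent to
$$u_\nu = -\frac{n-2}{2(n-1)}\, u\, H_0 \quad \text{on } \Sigma.$$
Since $H_0 > 0$ and $u > 0$ on $\Sigma$, this says $u_\nu < 0$ on $\Sigma$: along the outward normal (into $\Omega$), $u$ is strictly decreasing, equivalently $u$ is strictly increasing as one moves from $\Sigma$ into $\Omega^c$. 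Now suppose for contradiction that $\min_{\Omega^c} v < 0$. By the minimum principle this minimum is attained at some $p \in \Sigma$ and not in the interior, so $v(p) = u(p) - 1 < 0$, and by the Hopf lemma (applicable since $v$ is superharmonic and the interior minimum is not attained) the inward-pointing normal derivative of $v$ at $p$ — that is, the derivative in the direction $-\nu$, pointing into $\Omega^c$ — must be strictly positive: $-u_\nu(p) = -v_\nu(p) > 0$, i.e. $u_\nu(p) < 0$. But that is automatically satisfied by the boundary condition and gives no contradiction directly, so the sign analysis alone is not enough — one must instead argue that $v \ge 0$ must already hold, and then upgrade.

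Here is the cleaner route, which I would actually write: show directly that $v \ge 0$. Suppose not; let $c = \min_{\Omega^c} v < 0$, attained at $p \in \Sigma$. Consider the harmonic function $w$ on $\Omega^c$ with $w = u$ on $\Sigma$ and $w \to 1$ at infinity (the capacitary-type solution); by the maximum principle $u \ge w$ on $\Omega^c$ since $\Delta_0 u \le 0$. It therefore suffices to prove $w > 1$, i.e. to prove the lemma for harmonic $u$ — but for harmonic $u$ the result is exactly the statement that the harmonic metric has $u_s > 1$, and this follows because $w$ is harmonic, $w \to 1$ at infinity, $w_\nu < 0$ on $\Sigma$ (from the boundary relation, which holds verbatim for $w$), so $w$ has no interior extrema, $\min_\Sigma w =: \beta$; if $\beta \le 1$ then at a point $q$ where $w(q) = \beta$ we would have, by Hopf, the outward (into $\Omega$) normal derivative $w_\nu(q) < 0$ being consistent — contradiction is obtained instead by noting $w - 1$ is harmonic, vanishes at infinity, and if it is negative somewhere its negative minimum on $\Sigma$ at $q$ forces $w_\nu(q) > 0$ by Hopf, contradicting $w_\nu(q) = -\frac{n-2}{2(n-1)} w(q) H_0(q) < 0$. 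Hence $w > 1$, so $u \ge w > 1$, and applying the strong minimum principle to the superharmonic $u - 1 \ge 0$ rules out interior zeros, giving $u > 1$ on all of $\Omega^c$.

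The main obstacle is bookkeeping the normal-direction conventions: one must be careful that $\nu$ is the outward normal to the \emph{exterior} region $\Omega^c$ (pointing into $\Omega$), so that Hopf's lemma at a boundary minimum of $u-1$ gives a derivative with a definite sign that genuinely contradicts the sign dictated by $H_g = 0$. Once the orientation is pinned down, the argument is a routine combination of the minimum principle, the Hopf lemma, and the conformal formula for mean curvature; the reduction to the harmonic case via $u \ge w$ is what makes the $\Delta_0 u \le 0$ (rather than $= 0$) case go through cleanly.
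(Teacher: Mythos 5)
Your proposal correctly identifies the ingredients the paper attributes to \cite{schwartz} (the minimum principle, the Hopf boundary-point lemma, and the conformal transformation law for mean curvature), but there are two concrete problems in the execution.

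First, a sign error traceable to the normal convention. The paper's formula \eqref{mctl} is written with $\nu$ the outward unit normal \emph{of $\Omega$}, i.e.\ pointing into $\Omega^c$ (check this on the Schwarzschild example: with $\nu=\partial_r$ at $r=R_s$, $u_\nu=-(n-2)/R_s<0$ and $H_0=(n-1)/R_s>0$ give $H_g=0$). You declare $\nu$ to be the outward normal to $\Omega^c$, pointing into $\Omega$, and then plug into the same formula with the same $H_0>0$ without flipping anything. As a result you conclude ``$u$ is strictly increasing as one moves from $\Sigma$ into $\Omega^c$,'' which is backwards, and you then assert Hopf and the boundary condition are compatible so ``the sign analysis alone is not enough.'' With the consistent (paper's) convention, the direct argument already closes: if $v=u-1$ is superharmonic on the connected exterior domain $\Omega^c$, tends to $0$ at infinity, and $\min v<0$ is attained at some $p\in\Sigma$ (not in the interior, by the strong minimum principle since $v$ cannot be a negative constant), then Hopf gives $v_\nu(p)>0$, i.e.\ $u_\nu(p)>0$ -- the derivative of a superharmonic function at a strict boundary minimum, taken in the direction pointing into the domain $\Omega^c$, is strictly positive. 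But \eqref{mctl} with $H_g=0$, $H_0>0$, $u>0$ forces $u_\nu(p)<0$. Contradiction, so $u\ge 1$; strictness then follows from the strong minimum principle in the interior and Hopf again on $\Sigma$. No reduction step is needed, and the superharmonic case $\Delta_0 u\le 0$ is handled directly.

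Second, the detour via the comparison function $w$ is built on a false claim. You say ``the boundary relation $w_\nu=-\frac{n-2}{2(n-1)}wH_0$, which holds verbatim for $w$''---but it does not. That Neumann-type relation is precisely the statement $H_g=0$ for the metric $u^{\frac{4}{n-2}}\delta$; it is a condition on $u$, and replacing $u$ by the harmonic function $w$ with the same \emph{Dirichlet} data on $\Sigma$ destroys it. One can salvage the detour by noting that $u-w\ge 0$ is superharmonic and vanishes on $\Sigma$, so $(u-w)_\nu\ge 0$ there, giving $w_\nu\le u_\nu<0$; but even so the detour is strictly unnecessary once the sign convention is corrected, because the direct Hopf contradiction applies to $u$ itself.
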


The key ingredients in the proof of this Lemma are the minimum principle for superharmonic functions and the transformation formula for mean curvature of a hypersurface under conformal deformations of the metric.
This is given by the equation
\begin{equation}\label{mctl}
H_g=u^{-\frac 2{n-2}}\left(H_0+\frac{2(n-1)}{n-2}\frac{u_{\nu}}u\right),
\end{equation}
where $\nu$ is the euclidean-unit outward normal of $\Omega$. (To check the constant multiplying $u_{\nu}/u$, observe that the boundary is minimal for the Riemannian Schwarzschild metric).

%%%%%%%%%%%%%%%%%%%%%%%%%%%%%%%%
%%%%%%%%%%%%%%%%%%%%%%%%%%%%%%%%

\begin{proof}[{\bf Proof of (I)}]
The proof of the first (strict) inequality of (I) is independent of $u$ being
harmonic at infinity.  Indeed, we note that
\begin{equation}\label{inte}
\int_M|\nabla_g\varphi |_g^2dV_g=\int_{\Omega^c}u^{-\frac 4{n-2}}|\nabla_0\varphi|^2u^{\frac{2n}{n-2}}dV_0=\int_{\Omega^c}u^2|\nabla_0\varphi|^2dV_0.
\end{equation}
Since $u> 1$ on $\Omega^c$ from Lemma \ref{fs} above, we conclude that $C_0(\Sigma)\le C_g(\Sigma)$.  
To show that equality is not possible in this inequality, note that both infima for the capacities
are achieved, as discussed in 
Remark 2.
Therefore, if $C_0(\Sigma)= C_g(\Sigma)$, there exists functions $\varphi,\psi$ with equal boundary conditions
so that $C_0(\Sigma)=\int_M|\nabla_0\psi |_0^2dV_0=C_g(\Sigma)=\int_M|\nabla_g\varphi |_g^2dV_g$.  Using equation \eqref{inte} and the
fact that $u>1$ 
we get $\int_{\Omega^c}|\nabla_0\varphi|^2dV_0<\int_{\Omega^c}u^2|\nabla_0\varphi|^2dV_0=\int_M|\nabla_0\psi |_0^2dV_0$,
contradicting the fact that $\psi$ achieves the infimum for the euclidean capacity. (Here we have used the fact that a non-constant 
harmonic function is not constant over sets of positive measure.)\\

Without loss of generality, we prove  the second inequality in (I) under the assumption
that $u$ is harmonic at infinity.
Let $v:\Omega^c\rightarrow (0,1)$ be the unique harmonic function ($\Delta_0v=0$) satisfying $v_{|\Sigma}=0, v(x)\rightarrow 1$ as $x\rightarrow \infty$. Then with $\varphi=\frac vu$ we have
$\varphi_{|\Sigma}=0, \varphi\rightarrow 1$ at infinity. Thus
$$(n-2)\omega_{n-1}C_g(\Sigma)\leq \int_M|\nabla_g\varphi|_g^2dV_g=\int_{\Omega^c}u^2|\nabla_0(\frac vu)|^2dV_0:={\mathcal{I}}=\lim_{\rho\rightarrow \infty}{\mathcal{I}}_{\rho},$$
where
$${\mathcal{I}}_{\rho}:=\int_{\mathbb{B}_{\rho}\setminus \Omega}u^2|\nabla_0(\frac vu)|^2dV_0=\int_{\mathbb{B}_{\rho}\setminus \Omega} \left[|\nabla_0v|^2-\nabla_0(v^2)\cdot \frac{\nabla_0 u}u+v^2\frac{|\nabla_0u|^2}{u^2}\right]dV_0.$$
Now
$$\int_{\mathbb{B}_{\rho}\setminus \Omega}\nabla_0(v^2)\cdot \frac{\nabla_0 u}udV_0=-\int_{\mathbb{B}_{\rho}\setminus \Omega} v^2\mbox{div}_0(\frac{\nabla_0 u}{u})dV_0+\int_{S_{\rho}}v^2\frac{u_r}ud\sigma^0_{\rho}$$
since $v_{|\Sigma}=0$. Noting $\mbox{div}_0(u^{-1}\nabla_0 u)=u^{-1}\Delta_0 u-u^{-2}|\nabla_0u|^2$ and $\Delta_0 u\leq 0$, we have
\begin{equation}\label{equ}
\int_{\mathbb{B}_{\rho}\setminus \Omega}\nabla_0(v^2)\cdot \frac{\nabla_0 u}udV_0\geq \int_{\mathbb{B}_{\rho}\setminus \Omega}v^2\frac{|\nabla_0u|^2}{u^2}dV_0+\int_{S_{\rho}}v^2\frac{u_r}ud\sigma_{\rho}^0,
\end{equation}
and hence we obtain
$${\mathcal{I}}_{\rho}\leq \int_{\mathbb{B}_{\rho}\setminus \Omega}|\nabla_0v|^2dV_0-\int_{S_{\rho}}v^2\frac{u_r}ud\sigma_{\rho}^0.$$
Taking limits as $\rho\rightarrow \infty$ and using the asymptotics of $u$ we find
$${\mathcal{I}}\leq (n-2)\omega_{n-1}C_0(\Sigma)+(n-2)\omega_{n-1}\frac m2.$$
From this it follows $C_g(\Sigma)\leq C_0(\Sigma)+\frac m2$, as claimed.\\

\noindent
{\bf Rigidity of (I).}  Here we do not assume that $u$ is harmonic at infinity.\\

\noindent
{\bf Claim.} Equality in the second inequality of (I) implies that
$u$ is harmonic.

\begin{proof}
We proceed by contradiction. Assume that  $C_g(\Sigma)=C_0(\Sigma)+\frac m2$, 
but $u$ is {\it not} harmonic.  Then,
   there exists $p\in\R^n$ and $a,b>0$ so that
$\Delta u\le-a< 0$ in $\mathbb B_b(p)$. (Recall that $u$ is superharmonic, i.e. $\Delta u\le 0$.)
Let $\{u_k\}$ be the approximation of $u$ by functions that are harmonic at infinity
as in Lemma \ref{approxi}.   
  For $\rho,k\ge |p|+b$, equation \eqref{equ} above
   (written for $u_k$) may be replaced by
\begin{align*}
\int_{\mathbb{B}_{\rho}\setminus \Omega}\nabla_0(v^2)\cdot \frac{\nabla_0 u_k}{u_k}
dV_0
 \geq & \int_{\mathbb{B}_{\rho}\setminus \Omega}v^2\frac{|\nabla_0u_k|^2}{u_k^2}dV_0+\int_{S_{\rho}}v^2\frac{(u_k)_r}{u_k}d\sigma_{\rho}^0\\
 &-\int_{B_b(p)} v^2\frac{\Delta_0u}{u}dV_0\notag \\
 \ge&   \int_{\mathbb{B}_{\rho}\setminus \Omega}v^2\frac{|\nabla_0u_k|^2}{u_k^2}dV_0+\int_{S_{\rho}}v^2\frac{(u_k)_r}{u_k}d\sigma_{\rho}^0+Cab^n,\notag
\end{align*}
where $v$ is as in equation \eqref{equ} and $C>0$ some positive constant
that depends on $u,v$ and $n$. From this it follows that

$$\mathcal{I}(u_k)\le (n-2)C_0(\Sigma)+(n-2)\omega_{n-1}\frac{m_k}{2}-Cab^n.$$
Taking limit $k\to\infty$ above gives $C_g(\Sigma)\leq C_0(\Sigma)+\frac m2
-Cab^n<C_0(\Sigma)+\frac m2.$ This contradicts the fact that 
$C_g(\Sigma)=C_0(\Sigma)+\frac m2$.  We deduce that $u$ must be harmonic, 
and one direction of the rigidity statement follows.  
\end{proof}

\noindent
{\bf Claim.}
If $u$ is harmonic (and therefore harmonic at infinity), the second inequality of (I) is an
equality.

\begin{proof}
Let $\psi$ be the function  that achieves the infimum for the capacity
$C_g(\Sigma)$.  From Lemma \ref{tran} it follows that such function (i.e. the $g$-harmonic function which is
exactly zero on $\Sigma$ and goes to one at infinity) satisfies $\Delta_{0}(u\psi)=0$.  We immediately 
recognize that $u\psi$ must be equal to the function $v$ from above since both are harmonic
and equal on $\Sigma$ and at infinity.  
Therefore, $\psi$ equals $\varphi=v/u$
from above, and all the above inequalities become equalities. 
\end{proof}

\noindent
 This finishes the proof of (I). 
\end{proof}

%%%%%%%%%%%%%%%%%%%%%%%%%%%%%%%%
%%%%%%%%%%%%%%%%%%%%%%%%%%%%%%%%

\begin{proof}[{\bf Proof of (II)}]  (This estimate is purely Euclidean;
no approximation by metrics that are harmonically flat at infinity is needed.)
Here we use a modification of the method described in \cite{braymiao}.  
First, we get an upper bound for $C_0(\Sigma)$ 
 using test functions of the form $\varphi=f\circ \phi$, where $\phi \in C^1(\Omega^c,\mathbb{R}_+)$ 
 is a (soon to be determined) proper function vanishing on $\Sigma=\Sigma_0$  whose level sets define a foliation 
 $(\Sigma_t)_{t\geq 0}$ of $\Omega^c$.  As noted in \cite{braymiao}, we have
\begin{equation}\label{vari}
(n-2)\omega_{n-1}C_0(\Sigma)\leq \inf \left\{\int_0^{\infty}(f')^2w(t)dt~:~f(0)=0,f(\infty)=1\right\},
\end{equation}
where $w(t)=\int_{\Sigma_t}|\nabla_0\phi| d\sigma^0_t>0.$\\

(We omit the subscript/superscript `0' for the remainder of the proof of (II).)\\

 Moving away from the method of  \cite{braymiao}, we note that the one-dimensional variational problem 
 \eqref{vari} is easily solved. 
\begin{claim} 
Provided  $w^{-1}\in L^1(0,\infty)$, the infimum of the right hand side of \eqref{vari} equals
$\mathbb{I}^{-1}=(\int_0^{\infty}\frac 1{w(s)}ds)^{-1}$, and is attained by the function 
$f(t)=\frac 1{\mathbb{I}}\int_0^tw^{-1}(s)ds.$   
\end{claim}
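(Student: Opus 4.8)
The plan is to treat \eqref{vari}'s right-hand side as a standard one-dimensional minimization of the quadratic functional $J[f]=\int_0^\infty (f')^2 w\,dt$ subject to the boundary conditions $f(0)=0$, $f(\infty)=1$, and to exhibit the minimizer explicitly by a Cauchy--Schwarz argument. First I would observe that for any admissible $f$, writing $1 = f(\infty)-f(0) = \int_0^\infty f'(t)\,dt$, the Cauchy--Schwarz inequality in the form $\left(\int_0^\infty f'(t)\,dt\right)^2 \le \left(\int_0^\infty (f')^2 w\,dt\right)\left(\int_0^\infty w^{-1}\,dt\right)$ gives immediately
$$ J[f] = \int_0^\infty (f')^2 w\,dt \;\ge\; \frac{1}{\int_0^\infty w^{-1}(s)\,ds} \;=\; \mathbb{I}^{-1}, $$
using that $w>0$ and $w^{-1}\in L^1(0,\infty)$ so the right-hand integral is finite and positive. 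This is valid for every admissible $f$, hence $\inf J \ge \mathbb{I}^{-1}$.

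Next I would verify that the proposed function $f(t)=\frac{1}{\mathbb{I}}\int_0^t w^{-1}(s)\,ds$ is admissible and attains this bound. Admissibility is clear: $f(0)=0$, and $f(\infty)=\frac{1}{\mathbb{I}}\int_0^\infty w^{-1}(s)\,ds = 1$; moreover $f$ is $C^1$ (indeed absolutely continuous, which suffices) with $f'(t) = \frac{1}{\mathbb{I}}\,w^{-1}(t)$. Plugging in,
$$ J[f] = \int_0^\infty \frac{1}{\mathbb{I}^2}\,w^{-2}(t)\,w(t)\,dt = \frac{1}{\mathbb{I}^2}\int_0^\infty w^{-1}(t)\,dt = \frac{1}{\mathbb{I}^2}\cdot \mathbb{I} = \mathbb{I}^{-1}. $$
Thus the lower bound is achieved, so the infimum equals $\mathbb{I}^{-1}$ and is attained by this $f$. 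This is exactly the equality case of the Cauchy--Schwarz step above, which occurs precisely when $(f')^2 w$ is proportional to $w^{-1}$, i.e. $f' \propto w^{-1}$, confirming uniqueness of the minimizer among admissible functions up to the normalization forced by the boundary conditions.

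One mild point to address is that the infimum in \eqref{vari} is over $C^1$ (or smooth) functions while the natural minimizer $f(t)=\frac{1}{\mathbb{I}}\int_0^t w^{-1}$ is only as regular as $w^{-1}$ allows; since $\phi$ is $C^1$ and its level sets foliate $\Omega^c$, $w(t)=\int_{\Sigma_t}|\nabla_0\phi|\,d\sigma_t$ is continuous and positive, so $f$ is $C^1$ and no approximation is needed. If one prefers to be cautious, a routine density argument (mollifying $f'$ while preserving the endpoint values) shows the smooth infimum cannot be smaller than $\mathbb{I}^{-1}$ either. The main obstacle, such as it is, is purely bookkeeping: ensuring $w^{-1}\in L^1(0,\infty)$ is exactly the hypothesis of the claim, and checking that the choice of $\phi$ made later in the proof of (II) indeed produces such a $w$ — but that verification belongs to the subsequent part of the argument, not to this claim. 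Everything here is the elementary Cauchy--Schwarz computation above.
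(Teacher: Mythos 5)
Your proof is correct and follows essentially the same route as the paper: both apply Cauchy--Schwarz to $1=\int_0^\infty f'\,dt = \int_0^\infty (f'w^{1/2})\,w^{-1/2}\,dt$ to get the lower bound $J[f]\ge \mathbb{I}^{-1}$, and then verify by substitution that the stated $f$ achieves it. Your writeup is somewhat more explicit (it also records the equality condition and an admissibility/regularity remark), but there is no difference in the underlying argument.
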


\begin{proof}
This follows from
$$1=\int_0^{\infty}f'dt=\int_0^{\infty}f'w^{1/2}w^{-1/2}dt\leq \left(\int_0^{\infty}(f')^2w(t)dt\right)^{1/2}
\left(\int_0^{\infty}w^{-1}(t)dt\right)^{1/2}.$$\end{proof}

\begin{remark} If $\Omega\subset \mathbb{R}^n$ is {\it convex}, it is natural to try to use the distance
function $\phi=dist(\cdot,\Sigma)$ for the above process.  In this case, the level sets of $\phi$ 
give a  foliation of $\Omega^c$ by outer parallel hypersurfaces.  We get  $|\nabla \phi|\equiv 1$, so 
$w(t)=|\Sigma_t|$  is the Euclidean (n-1)-dimensional area. By a well-known formula
$$|\Sigma_t|=|\Sigma|+\sum_{j=0}^{n-2}\left(\int_{\Sigma}\sigma_j(\vec{k})d\sigma\right)t^j+\omega_{n-1}t^{n-1},$$
where $\sigma_j(\vec{k})$ is the j-th elementary symmetric function of the principal curvatures 
$\vec{k}=(k_1,\ldots, k_{n-1}), ~k_i>0$ of $\Sigma$.  Now since 
$$\sigma_1(\vec{k})=H\mbox{ and }  \sigma_j(\vec{k})\leq H^j\mbox{ for }j=1,\dots,n-1,$$
we see that an estimate based on this foliation would involve the integrals $\int_{\Sigma}H^jd\sigma$. 
Since we are interested in estimating the capacity in terms of the ADM mass (especially in view of part (III)), 
 we  choose a different function to construct the foliation.\\
\end{remark}

\noindent
Consider the foliation $({\Sigma_t})_{t\geq 0}$ defined by the level sets of the function 
given by Huisken and Ilmanen's weak solution of inverse mean curvature flow 
\cites{huiskenilmanen,hi08} in $\Omega^c\subset \mathbb{R}^n$. 
We recall the summary given in \cite{braymiao} (which holds in all dimensions):

\begin{thm}[Huisken-Ilmanen, \cites{huiskenilmanen,hi08}] \label{huil} \noindent
\begin{itemize}
\item There exists a proper, locally Lipschitz function $\phi\geq 0$ on $\Omega^c$, $\phi_{|\Sigma}=0$. For $t>0$, $\Sigma_t=\partial\{\phi < t\}$ and 
$\Sigma_t'=\partial \{\phi>t\}$ define increasing families of $C^{1,\alpha}$ hypersurfaces;

\item The hypersurfaces $\Sigma_t$ (resp.$\Sigma_t'$) minimize (resp. strictly minimize) area among surfaces homologous to $\Sigma_t$ in $\{\phi\geq t\}\subset \Omega^c$. The hypersurface $\Sigma'=\partial \{\phi>0\}$ strictly minimizes area among hypersurfaces homologous to $\Sigma$ in $\Omega^c$.

\item There exists a closed singular set $Z\subset \Omega^c$ not intersecting $\Sigma$, of Hausdorff codimension at least 8 in $\mathbb{R}^n$, so that if $\Sigma_t$ (resp. $\Sigma_t'$) does not intersect $Z$ we have (in the sense of $C^{1,\beta} convergence,\beta<\alpha$):
$$\Sigma_s\rightarrow \Sigma_t\mbox{ as }s\uparrow t; \mbox{ (resp.) }\Sigma_s\rightarrow \Sigma_t'\mbox{ as }s\downarrow t.$$

\item For almost all $t>0$, the weak mean curvature of $\Sigma_t$ is defined and equals $|\nabla \phi|$, which is positive a.e. on $\Sigma_t$. 
\end{itemize}
\end{thm}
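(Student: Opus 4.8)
\noindent
\textbf{Proof proposal (for Theorem \ref{huil}).}
The plan is to reproduce the Huisken--Ilmanen construction of weak inverse mean curvature flow via a level-set formulation and elliptic regularization, adapted to the exterior domain $\Omega^c\subset\mathbb{R}^n$. First I would set up the weak formulation. Formally, if $\phi$ is a level-set function for smooth IMCF ($\partial_t x=\nu/H$, $\Sigma_t=\{\phi=t\}$), then $\mathrm{div}(\nabla\phi/|\nabla\phi|)=|\nabla\phi|$. Following Huisken--Ilmanen, declare a locally Lipschitz $\phi\ge 0$ with $\phi|_\Sigma=0$ to be a weak solution with initial surface $\Sigma=\partial\Omega$ if, for every locally Lipschitz $v$ with $\{v\ne\phi\}$ compactly contained in $\Omega^c$, one has $\int_K(|\nabla\phi|+\phi|\nabla\phi|)\,dx\le\int_K(|\nabla v|+v|\nabla\phi|)\,dx$ on any compact $K\supset\{v\ne\phi\}$; equivalently, each sublevel set $E_t=\{\phi<t\}$ minimizes $F\mapsto P(F;K)-\int_{F\cap K}|\nabla\phi|\,dx$ under compactly supported perturbations. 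I would record existence and compactness of minimizers of this functional and the key structural fact that each $\partial E_t$ is then a \emph{minimizing hull}: it minimizes area from the outside. This gives the second bullet directly --- comparison against competitors homologous to $\Sigma_t$ in $\{\phi\ge t\}$ yields the minimizing property, and the $\int v|\nabla\phi|$ term produces the \emph{strict} minimization for $\Sigma_t'$ and for $\Sigma'=\partial\{\phi>0\}$.

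Next, existence. I would approximate $\phi$ by solutions of the elliptic regularization
\begin{equation*}
\mathrm{div}\!\left(\frac{\nabla u^{\varepsilon}}{\sqrt{|\nabla u^{\varepsilon}|^{2}+\varepsilon^{2}}}\right)=\sqrt{|\nabla u^{\varepsilon}|^{2}+\varepsilon^{2}},\qquad u^{\varepsilon}|_{\Sigma}=0,
\end{equation*}
on large annuli $\Omega^c\cap B_R$ with a suitable outer boundary condition, then let $R\to\infty$ and $\varepsilon\to 0$. The graph $\{(x,u^{\varepsilon}(x)/\varepsilon)\}$ is a smooth (downward translating) IMCF in $\Omega^c\times\mathbb{R}$, which supplies geometric meaning for the estimates. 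The technical heart is the a priori bounds, uniform in $\varepsilon$: a $C^0$ bound from barriers (using that $\Omega$ is bounded and that the flow for the complement of a ball is explicit via \eqref{schwarz}-type functions), and --- crucially --- a local gradient estimate $|\nabla u^{\varepsilon}|\le C(1+\sup|u^{\varepsilon}|)$ on compact sets, obtained by a maximum-principle/Bochner argument on the IMCF graph. With these, $u^{\varepsilon}$ is precompact in $C^{0,\alpha}_{\mathrm{loc}}$; the limit $\phi$ is locally Lipschitz, proper and nonnegative with $\phi|_\Sigma=0$ (inherited from the approximations), and one checks it satisfies the variational inequality above by passing to the limit.

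For the regularity in the first bullet: each $\partial\{\phi\ge t\}$ and $\partial\{\phi>t\}$ bounds a minimizing hull, hence is the reduced boundary of a set that is almost area-minimizing; by the De Giorgi--Federer regularity theory for such sets, the reduced boundary is $C^{1,\alpha}$ (with a singular set of Hausdorff codimension at least $8$, suppressed exactly as in the cited summary). Monotonicity of the families $(\Sigma_t)$, $(\Sigma_t')$ is immediate from $\{\phi\ge t\}\supset\{\phi\ge s\}$ for $t<s$. Finally, the last bullet follows from the first variation of the functional at $E_t$: its Euler--Lagrange identity is precisely $H_{\Sigma_t}=|\nabla\phi|$ in the weak sense on the regular part, and the coarea formula shows that $\{|\nabla\phi|=0\}$ on $\Sigma_t$ has measure zero for a.e. $t$ (since only a null set of $t$'s can have level sets of positive measure), giving $|\nabla\phi|>0$ a.e. on $\Sigma_t$.

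The step I expect to be the main obstacle is the uniform interior gradient estimate for the regularized equation: the operator degenerates as $\varepsilon\to 0$, so this does not follow from standard quasilinear theory and instead requires exploiting the geometry of the translating IMCF graph via a maximum principle applied to a carefully chosen quantity built from $|\nabla u^{\varepsilon}|$ and the graph's mean curvature. A secondary difficulty is verifying that the $C^0$-limit of the regularizations is a genuine weak solution rather than merely a subsolution, and controlling the ``jump times'' at which $\Sigma_t$ fattens; both are handled by the minimizing-hull machinery and constitute the most delicate part of the argument.
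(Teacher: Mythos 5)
Theorem~\ref{huil} is not proved in this paper at all: it is quoted, in the summarized form used by Bray--Miao \cite{braymiao}, as a known result whose proof is the content of \cite{huiskenilmanen} and \cite{hi08}. Measured against that, your proposal is a faithful outline of the original Huisken--Ilmanen argument: the level-set weak formulation via the functional $J_\phi(v)=\int_K\bigl(|\nabla v|+v|\nabla\phi|\bigr)\,dx$, the equivalent characterization through the sublevel sets minimizing $P(F;K)-\int_{F\cap K}|\nabla\phi|\,dx$, elliptic regularization and the translating-graph interpretation, uniform $C^0$ and interior gradient bounds, compactness, the minimizing-hull properties yielding the second bullet, almost-minimizer (De Giorgi--Federer) regularity yielding the first, and the first-variation/coarea argument yielding the third. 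So you have identified the correct (indeed the only known) route, and your handling of the higher-dimensional singular set matches how the paper itself suppresses it by citing \cite{hi08}.

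Be aware, however, that what you have is a roadmap rather than a proof: the items you flag as ``main obstacles'' --- the $\varepsilon$-uniform interior gradient estimate, properness of the limit via barriers at infinity, the verification that the $C^0$-limit is a genuine weak solution, the jump-time analysis, and the compactness theory for weak solutions --- are precisely the technical core of \cite{huiskenilmanen}, and nothing in the sketch substitutes for them; since the paper treats the theorem as a black box, this is acceptable here but should be stated as such. Three smaller points merit correction. First, the \emph{strict} minimization for $\Sigma_t'$ and $\Sigma'$ is not produced by the bulk term $\int v|\nabla\phi|$; it comes from the structural fact that $\{\phi>t\}$ is the strictly minimizing hull of $\{\phi\ge t\}$, obtained by a limiting argument over nearby levels (Property~1.4 of \cite{huiskenilmanen}). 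Second, the explicit radial barrier in $\mathbb{R}^n$ is the IMCF level-set function $(n-1)\log(|x|/R)$ of Remark~\ref{sphere}, not a function of the Schwarzschild type \eqref{schwarz}, which solves a different equation. Third, the positivity of $|\nabla\phi|$ on $\Sigma_t$ holds for \emph{almost every} $t$, and the coarea argument should be phrased as $\int_0^\infty\mathcal{H}^{n-1}\bigl(\{|\nabla\phi|=0\}\cap\Sigma_t\bigr)\,dt=0$ (apply the coarea formula on the critical set), rather than via level sets of positive Lebesgue measure, which is a different and insufficient statement.
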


From Theorem \ref{huil} and  the Claim from above it follows that
\begin{equation}\label{imcf2}
(n-2)\omega_{n-1}C_0(\Sigma)\leq \left(\int_0^{\infty}w^{-1}(t)dt\right)^{-1}, \mbox{ where } w(t):=\int_{\Sigma_t}Hd\sigma_t.\end{equation}

\begin{lemma} \label{imcf} Consider the foliation $\{\Sigma_{t}\}$ given by IMCF in 
$\Omega^c\subset \mathbb{R}^n$ as above. Then
$$\int_{\Sigma_t}Hd\sigma\leq \left(\int_{\Sigma_0}Hd\sigma\right)e^{\frac{n-2}{n-1}\cdot t}$$
 for $t\geq 0$. \end{lemma}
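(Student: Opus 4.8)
The plan is to differentiate the quantity $W(t):=\int_{\Sigma_t}H\,d\sigma$ along the inverse mean curvature flow and show $W'(t)\le \frac{n-2}{n-1}W(t)$, from which the stated inequality follows by integrating the differential inequality (Gr\"onwall). Under IMCF the normal speed is $1/H$, so the first variation formulas give, for a family of smooth hypersurfaces, the evolution
$$\frac{d}{dt}\int_{\Sigma_t}H\,d\sigma=\int_{\Sigma_t}\Big(\frac{\partial H}{\partial t}+\frac{H}{H}\cdot H\Big)d\sigma
=\int_{\Sigma_t}\Big(-\Delta_{\Sigma_t}\tfrac1H-\tfrac1H(|A|^2+\mathrm{Ric}(\nu,\nu))+H\Big)d\sigma.$$
In our ambient space $\R^n$ we have $\mathrm{Ric}\equiv 0$, and the divergence term integrates to zero on a closed hypersurface, leaving
$$\frac{d}{dt}\int_{\Sigma_t}H\,d\sigma=\int_{\Sigma_t}\Big(H-\frac{|A|^2}{H}\Big)d\sigma.$$
Now I would use the pointwise Newton--MacLaurin inequality $|A|^2\ge \frac{H^2}{n-1}$ (Cauchy--Schwarz on the principal curvatures, since $\Sigma_t$ has $n-1$ of them). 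This gives $H-\frac{|A|^2}{H}\le H-\frac{H}{n-1}=\frac{n-2}{n-1}H$, hence $W'(t)\le \frac{n-2}{n-1}W(t)$, and therefore $W(t)\le W(0)e^{\frac{n-2}{n-1}t}$ on any interval where the flow is smooth.

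The main obstacle is that Huisken--Ilmanen's weak IMCF (Theorem~\ref{huil}) is only $C^{1,\alpha}$ and the flow may jump; the surfaces $\Sigma_t$ need not be smooth and $H$ exists only as a weak mean curvature for a.e.\ $t$. To handle this I would work with Huisken--Ilmanen's Geroch-type monotonicity machinery directly: their weak formulation yields, for $0\le s\le t$ and with $W(t)=\int_{\Sigma_t}H\,d\sigma_t$ now interpreted via the weak mean curvature $|\nabla\phi|$, the monotonicity inequality obtained by testing the weak equation and using that the jump regions are minimal (so $H=0$ there, which only helps the inequality since passing to $\Sigma_t'$ across a jump does not increase $\int H$). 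Concretely, the evolution estimate above is exactly the infinitesimal form of one of the inequalities in \cite{huiskenilmanen,hi08} used to prove monotonicity of the Hawking mass; the term $\int(H-|A|^2/H)$ is controlled in the weak setting by the same cut-off and approximation arguments they employ, and the Ricci term vanishes because the ambient metric is Euclidean. Thus $W$ is absolutely continuous in $t$ with $W'\le \frac{n-2}{n-1}W$ a.e., and $W$ does not jump upward across the discrete jump times (in fact it can only drop), so integrating yields $W(t)\le W(0)e^{\frac{n-2}{n-1}t}$.

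For the reader's convenience I would present the computation in the smooth case as the heart of the argument, noting that the boundary term $\int_{\Sigma_t}\Delta_{\Sigma_t}(1/H)\,d\sigma=0$ by the divergence theorem on the closed hypersurface $\Sigma_t$, that $\mathrm{Ric}(\nu,\nu)=0$ since the background is flat, and that $|A|^2=\sum_i k_i^2\ge \frac{1}{n-1}\big(\sum_i k_i\big)^2=\frac{H^2}{n-1}$; then remark that the weak IMCF theory of Huisken--Ilmanen makes all of this rigorous with $H$ replaced by the a.e.-defined weak mean curvature, the jump times contributing only non-positive corrections to $W$. This is the same regularity bookkeeping already invoked implicitly when we wrote \eqref{imcf2}, so no new technical input is needed beyond \cites{huiskenilmanen,hi08}. \qed
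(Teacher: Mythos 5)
Your proof is correct and follows essentially the same route as the paper: compute $\frac{d}{dt}\int_{\Sigma_t} H\,d\sigma = \int_{\Sigma_t}\bigl(H - |A|^2/H\bigr)\,d\sigma$ in the smooth case, apply the Cauchy--Schwarz bound $|A|^2 \ge H^2/(n-1)$ to obtain the differential inequality, integrate, and observe that total mean curvature does not increase across jump times of the weak flow. You spell out the derivation of the evolution formula (via $\partial_t H$ and $\partial_t d\sigma$) where the paper simply cites Huisken--Ilmanen, but the argument is the same.
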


\begin{remark}\label{sphere}
Note that equality holds in the above inequality 
 for the foliation by  IMCF outside a sphere, which is  given by $\Sigma_t=\partial\mathbb{B}_{R(t)}\subset \mathbb{R}^n$,
 where  $R(t)=e^\frac t{n-1}$.
\end{remark}

\begin{proof}[Proof of Lemma \ref{imcf}] From \cite{huiskenilmanen} we have that, so long as the 
evolution remains smooth, 
\begin{equation}\label{aleks}
\frac d{dt}\left(\int_{\Sigma_t}Hd\sigma_t\right)=\int_{\Sigma_{t}}\left(H-\frac{|A|^2}H\right)d\sigma_t\leq \frac{n-2}{n-1}\int_{\Sigma_t}Hd\sigma_t,
\end{equation}
 where $A$ denotes the second fundamental form, and   the second inequality follows from
\begin{equation}\label{ric}
H-\frac{|A|^2}H-\frac{n-2}{n-1}H=\frac 1{(n-1)H}(H^2-(n-1)|A|^2)\leq 0.
\end{equation}
(Note that equality occurs in this last inequality if and only if each connected component of $\Sigma_t$ is a sphere.)  This concludes the proof in the case of smooth solutions.
\end{proof}

\begin{proof}[Proof of Lemma \ref{imcf} in the case of weak solutions]

We need the assumption $\Sigma'\cap Z=\emptyset$: the outer minimizing hull of $\Sigma$ does not intersect the singular set of $\phi$. This is true in case $n<8$ (then $Z=\emptyset$) or if $\Sigma$ is outer-minimizing (then $\Sigma'=\Sigma$, which does not intersect $Z$.)\vspace{.2cm}

By \cite{huiskenilmanen} a variational solution of IMCF $\phi\in Lip_{loc}(\Omega^c;\mathbb{R}_+)$ (proper) can be approximated (locally in Lipschitz norm) by proper functions $u_i\in C^2(\Omega^c)$ with $L^2$ convergence of the (weak) mean curvature of level sets. For each $t>0$, with $\Omega_t=\{\phi < t\}$:
$$\phi_i\rightarrow \phi\mbox{ in }{ Lip}(\Omega_t),\quad H_i\rightarrow H\mbox{ in }L^2(\Omega_t).$$

From Lemma A1 in the Appendix, for any $\Phi\in Lip(0,t)$, $\Phi\geq 0$ with compact support and $\varphi_i=\Phi\circ \phi_i,$ we have:
$$-\int_{\Omega_t}\nabla \varphi_i\cdot \nu_iH_idV_0=\int_{\Omega_t}\varphi_i(H_i^2-|A_i|^2)dV_0
\leq \frac{n-2}{n-1}\int_{\Omega_t}\varphi_iH_i^2dV_0.$$

Taking limits as $i\rightarrow \infty$, since $\varphi_i\rightarrow \varphi:=\Phi\circ \phi$ in $Lip(\Omega_t)$, we obtain:
$$-\int_{\Omega_t}\nabla \varphi\cdot \nu HdV_0\leq \frac{n-2}{n-1}\int_{\Omega_t}\varphi H^2dV_0.$$
\vspace{.3cm}

Now given $0<\bar{t}<t$ and $0<\delta < (t-\bar{t})/2$,  define $\Phi$ on $[0,t]$ by:
$$\Phi(s)=0\mbox{ on }[0,\bar{t}];\quad\Phi(s)=(s-\bar{t})/\delta\mbox{ on }[\bar{t},\bar{t}+\delta];\quad \Phi(s)=1\mbox{ on }[\bar{t}+\delta,t-\delta];\quad \Phi(s)=(t-s)/\delta\mbox{ on }[t-\delta,t].$$
Let $\varphi=\Phi\circ \phi$. Using the inequality just derived, the coarea formula and the fact that $H=|\nabla \phi|$ a.e.:
$$-\int_0^t\Phi'(s)(\int_{\Sigma_s}Hd\sigma_s)ds=-\int_{\Omega_t}(\Phi'\circ \phi)\nabla \phi\cdot \nu HdV_0=-\int_{\Omega_t}\nabla \varphi \cdot \nu HdV_0$$
$$\leq \frac{n-2}{n-1}\int_{\Omega_t}\varphi H^2dV_0
=\frac{n-2}{n-1}\int_{\Omega_t}\varphi H|\nabla \phi|dV_0
=\frac{n-2}{n-1}\int_0^t\Phi(s)\int_{\Sigma_s}Hd\sigma_sds.$$
Since the left hand side equals $(1/\delta)[\int_{t-\delta}^{\delta}\int_{\Sigma_s}Hd\sigma_sds-\int_{\bar{t}}^{\bar{t}+\delta}\int_{\Sigma_s}Hd\sigma_sds]$,
we find, letting $\delta\rightarrow 0$, for a.e. pair $0<\bar{t}<t$ :
$$\int_{\Sigma_t}Hd\sigma_t\leq\int_{\Sigma_{\bar{t}}}Hd\sigma_{\bar{t}}
+ \frac{n-2}{n-1}\int_{\bar{t}}^t\int_{\Sigma_s}Hd\sigma_sds.$$
By assumption, the singular set $Z$ does not intersect $\Sigma'$. Thus we may let $\bar{t}_i\downarrow 0$ and conclude (see (1.13) in \cite{huiskenilmanen}):
$$\Sigma_{\bar{t}_i}\rightarrow \Sigma', \quad \int_{\Sigma_{\bar{t}_i}}Hd\sigma_{\bar{t}_i}\rightarrow \int_{\Sigma'} Hd\sigma'.$$
On the other hand, (1.15) in \cite{huiskenilmanen}:
$$H_{\Sigma'}=0\mbox{ on }\Sigma'\setminus \Sigma; \quad H_{\Sigma'}=H_{\Sigma}\mbox{ a.e. on }\Sigma'\cap \Sigma, \quad |\Sigma|=|\Sigma'|$$
imply:
$$\int_{\Sigma'}Hd\sigma'\leq \int_{\Sigma}Hd\sigma.$$
We conclude that for a.e. $t>0$:
$$\int_{\Sigma_t}Hd\sigma_t\leq\int_{\Sigma}Hd\sigma
+ \frac{n-2}{n-1}\int_0^t\int_{\Sigma_s}Hd\sigma_sds.$$

Now the claim of Lemma \ref{imcf} follows from Gronwall's Lemma. 

\end{proof}

By straightforward integration, Lemma \ref{imcf} implies:
$$\left(\int_0^{\infty}w^{-1}(t)dt\right)^{-1}\leq \frac{n-2}{n-1}\int_{\Sigma}Hd\sigma.$$
Together with equation \eqref{imcf2} this gives
$$C_0(\Sigma)\leq \frac 1{(n-1)\omega_{n-1}}\int_{\Sigma}Hd\sigma,$$
as claimed in part (II) of the main theorem.\\

\noindent
{\bf Rigidity of (II).}  From Remark \ref{sphere} it follows that the  inequality of part (II) is an equality whenever
$\Sigma$ is a round sphere.  

%On the other hand, if  equality holds in part (II), it follows that 
%the evolution by IMCF does not have any jumps, i.e. remains classical.  
%Indeed, if the flowing hypersurface
%jumps,  then the total mean curvature {\it strictly
%decreases} --this follows from arguments in the solution of the obstacle problem.  In other words, if  the 
%flow were to jump, the inequality in (II) would be is strict.  Therefore, in the case of equality
% there are no jumps.  In particular this implies that $\Sigma$ is connected.

On the other hand, if equality holds in part (II), it follows that 
$$\int_{\Sigma_t}Hd\sigma= \left(\int_{\Sigma_0}Hd\sigma\right)e^{\frac{n-2}{n-1}\cdot t}\mbox{ for a.e. } t\geq 0,$$
and therefore:
$$H^2=(n-1)|A|^2\mbox{ on }\Sigma_t,\mbox{ for a.e. }t\geq 0.$$
This implies $\Sigma_t$ is a disjoint union of  round spheres, for a.e. $t\geq 0$. For a solution of inverse mean curvature flow in $\mathbb{R}^n$, this is  only possible if $\Sigma_t$ is, in fact, a single round sphere for every $t$. 
(See e.g. the Two Spheres Example 1.5 of \cite{huiskenilmanen}.) This proves part (II).\end{proof}

%Finally,  equality in equation \eqref{ric} gives that
 %$\Sigma_{t}$ is a round sphere for all $t$.  This proves part (II).  \end{proof}

%%%%%%%%%%%%%%%%%%%%%%%%%%%%%%%%
%%%%%%%%%%%%%%%%%%%%%%%%%%%%%%%%

\begin{proof}[{\bf Proof of (III)}]  Here we may assume, by the argument described at
the beginning of \S 4, that
$u$ is harmonic at infinity.
 From the transformation law for the mean curvature
 given by equation \eqref{mctl}, together with the divergence theorem, it follows that
\begin{align*}
 \int_{\mathbb{B}_{\rho}\setminus \Omega}{\Delta_0u}dV_0&= \int_{S_{\rho}}{u_r}d\sigma_{\rho}^0-\int_{\Sigma}{u_{\nu}}d\sigma_0\\
 &= -m\omega_{n-1}\frac{n-2}{2}+O(\rho^{-1})+\frac{n-2}{2(n-1)}\int_{\Sigma}H_{0}ud\sigma_0.
\end{align*}
Taking the limit $\rho\to\infty$ we obtain
\begin{equation}\label{mass}
m=-\frac 2{(n-2)\omega_{n-1}}\int_{\Omega^c}{\Delta_0u}dV_0+\frac{1}{(n-1)\omega_{n-1}}
\int_{\Sigma}H_{0}ud\sigma_0.
\end{equation}
Since $\Delta_0u\leq 0$ on $\Omega^c$ and $u\geq \alpha$ on $\Sigma$, this gives the inequality in (III).\\

\noindent
{\bf Rigidity of (III).}  
 For the rigidity statement of (III) we only need to prove one direction 
 since (clearly) for the Riemannian Schwarzschild manifold, the above inequalities are all equalities. Here we may not assume that $u$ is harmonic at infinity (although this will
 follow from the claim below).

If equality holds in (III), we have that
 \begin{equation}\label{rigidity3a}
 \int_{\Sigma}H_0d\sigma_0=(n-1)\omega_{n-1}\frac m{\alpha}.
 \end{equation}
    
 \begin{claim} $u$ is harmonic on $\Omega^c$, and is (the same) constant on (all components of) $\Sigma$.
  \end{claim}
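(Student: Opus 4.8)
The plan is to prove the Claim in two stages, starting from the standing assumption that equality holds throughout part (III), so in particular \eqref{rigidity3a} holds and, as already noted, $\Delta_0 u\equiv 0$ on $\Omega^c$. This immediately gives the first assertion, that $u$ is harmonic on $\Omega^c$; the remaining work is to show $u$ is constant on each component of $\Sigma$ and that this constant is the same for all components. For the constancy, I would revisit the chain of inequalities that produced (III): the inequality came from bounding $\int_\Sigma H_0 u\, d\sigma_0 \ge \alpha \int_\Sigma H_0\, d\sigma_0$ using $u\ge \alpha$ on $\Sigma$ together with $H_0>0$. Equality in this step forces $u\equiv\alpha$ on $\Sigma$ wherever $H_0>0$; since $H_0>0$ everywhere on $\Sigma$ by the mean-convexity hypothesis, we conclude $u\equiv\alpha$ on all of $\Sigma$. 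This simultaneously handles constancy on each component and the equality of the constants across components — it is the same constant $\alpha$ everywhere.

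First, then, I would write the equality case of \eqref{mass}: combining $\Delta_0 u\equiv 0$ with \eqref{rigidity3a} forces
$$\frac{1}{(n-1)\omega_{n-1}}\int_\Sigma H_0 u\, d\sigma_0 = m = \frac{1}{(n-1)\omega_{n-1}}\cdot \alpha\int_\Sigma H_0\, d\sigma_0,$$
hence $\int_\Sigma H_0(u-\alpha)\, d\sigma_0 = 0$. Since $u-\alpha\ge 0$ on $\Sigma$ by definition of $\alpha$, and $H_0>0$ pointwise on $\Sigma$, the integrand is nonnegative and continuous, so it vanishes identically; therefore $u\equiv\alpha$ on $\Sigma$. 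Second, I would record that $u$ is now a function on $\Omega^c$ that is harmonic in the interior, equals the constant $\alpha$ on $\partial\Omega^c=\Sigma$, and tends to $1$ at infinity; this is exactly the data needed to pin down $u$ in the later part of the rigidity argument (one compares $u$ with the harmonic capacity potential), but for the Claim itself we only need the two displayed conclusions.

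I do not expect a genuine obstacle here: the Claim is essentially an equality-case bookkeeping statement extracted from the proof of the inequality in (III), and every ingredient — harmonicity from $\Delta_0 u\le 0$ being saturated, constancy on $\Sigma$ from the pointwise bound $u\ge\alpha$ being saturated against a strictly positive weight $H_0$ — is already in hand. The only point requiring a line of care is justifying that $\int_\Sigma H_0(u-\alpha)\, d\sigma_0=0$ with a nonnegative continuous integrand forces $u\equiv\alpha$ on $\Sigma$; this is immediate from continuity of $u$ and $H_0$ and $H_0>0$. If one wanted to be scrupulous about the passage to the limit $\rho\to\infty$ in \eqref{mass}, the harmonic-flatness-at-infinity normalization already invoked at the start of the proof of Theorem \ref{two} makes the $O(\rho^{-1})$ error term legitimate, so no further work is needed there.
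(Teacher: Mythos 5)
Your proof is correct and takes essentially the same approach as the paper: you extract harmonicity from saturation of $\Delta_0 u \le 0$ in \eqref{mass}, and constancy of $u$ on $\Sigma$ from saturation of the pointwise bound $u\ge\alpha$ against the strictly positive weight $H_0$. The only difference is that you spell out the second step as $\int_\Sigma H_0(u-\alpha)\,d\sigma_0 = 0$ with a nonnegative continuous integrand, which is a welcome explicitation of what the paper leaves implicit.
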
 
  
\begin{proof} Let $\{u_k\}$ be the approximating sequence of $u$ from Lemma \ref{approxi}. 
If $\Delta u\neq 0$ at some point in $\Omega^c$, then $\frac 2{(n-2)\omega_{n-1}}\int_{\Omega^c}{\Delta_0u}dV_0<0.$
Writing  equation \eqref{mass} for $u_k$ and taking the limit $k\to\infty$
contradicts equation \eqref{rigidity3a}.  Thus, $u$ must be harmonic, and therefore
harmonic at infinity.
Note that since the inequality in (III) is obtained from 
equation  \eqref{mass} by replacing $u$  by its 
minimum on 
$\Sigma$, it follows that, in the case of equality in (III),   $u$ equals its minimum  
on $\Sigma$, i.e.
 $u|_{\Sigma}\equiv \min_{\Sigma}u=\alpha.$   \end{proof}

\begin{claim} $\alpha \geq 2$.
\end{claim}

\begin{proof} From the previous claim $\Delta_0u=0$, so
$$0=\int_{\Omega^c}u\Delta_0udV_0=-\int_{\Omega^c}|\nabla_0 u|^2dV_0-\frac m2 \omega_{n-1}(n-2)-\int_{\Sigma}uu_{\nu}d\sigma_0.$$
Also, from that claim  $u_{|\Sigma}\equiv \alpha$, so we know $u$ is the optimal function for
 $C_0^{(\alpha, 1)}(\Sigma)$ (cf. Remark 2).  Furthermore, using Remark 3
 it follows that  $\int_{\Omega^c}|\nabla_0 u|^2dV_0=(n-2)\omega_{n-1}(\alpha-1)^2C_0(\Sigma).$
 Combining this with the above equation we obtain
 
\begin{equation*}
(n-2)\omega_{n-1}(\alpha-1)^2C_0(\Sigma)=-\frac m2\omega_{n-1}(n-2)+\frac{n-2}{2(n-1)}\alpha^2\int_{\Sigma}H_0d\sigma_0.
\end{equation*}

We now use equation \eqref{rigidity3a} to substitute the last term in the above equation.  We get
\begin{equation}\label{rigidity3}
(\alpha -1)C_0(\Sigma)=\frac m2.
\end{equation}

It is easy to see that equations \eqref{rigidity3a}, \eqref{rigidity3},
combined with the inequality in (II),  imply $\alpha \geq 2$. 
\end{proof}
This concludes the proof of (III). 
\end{proof}

%%%%%%%%%%%%%%%%%%%%%%%%%%%%%%%%
%%%%%%%%%%%%%%%%%%%%%%%%%%%%%%%%

\begin{proof}[{\bf Proof of (IV)}]  
Here we may assume, once again by the argument described at
the beginning of \S 4, that
$u$ is harmonic at infinity.  

Let $f:[1,\infty)\rightarrow \mathbb{R}_+$ be a $C^2$ function 
satisfying the following conditions: 
\begin{align}\label{eff}
\left\{
\begin{tabular}{l}
$f> 0, ~f'< 0,~f''>0$ on $[1,\infty)$, 
$f(1)=1$, and $f'(1)=-1$ \\
\end{tabular}
\right.
\end{align}

Then $f\circ u\rightarrow 1$ at infinity, while $0<f(\alpha_1)\leq (f\circ u)_{|\Sigma}\leq f(\alpha)$.

Note that since
\begin{equation}\label{lap}
\Delta_0(f\circ u)=(f''\circ u)|\nabla_0u|^2+(f'\circ u)\Delta_0 u\geq 0,
\end{equation}
we have
\begin{align} \label{3s}
0\leq\int_{\mathbb{B}_{\rho}\setminus \Omega}|\nabla_0(f\circ u)|^2dV_0+&\int_{B_{\rho}}(f\circ u)(f''\circ u)|\nabla_0u|^2dV_0-
\int_{B_{\rho}}(f\circ u)|f'\circ u|\Delta_0 udV_0\\
\notag&=
\int_{S_{\rho}}f\circ u(f\circ u)_rd\sigma^0_{\rho}-\int_{\Sigma}f\circ u(f\circ u)_{\nu}d\sigma.
\end{align}
For the integral over $\Sigma$, using the 
boundary condition on $u$ we obtain
$$\int_{\Sigma}f\circ u(f\circ u)_{\nu}d\sigma=\int_{\Sigma}f\circ u(f'\circ u)u_{\nu}d\sigma=\frac{2(n-2)}{n-1}\int_{\Sigma}f\circ u|f'\circ u|uH_0d\sigma.$$
For the first term of the right hand side of equation \eqref{3s} we use the asymptotics of $u_r$ to see that, in 
 the limit $\rho\rightarrow\infty$,  the integral over $S_{\rho}$ satisfies
$$\lim_{\rho \rightarrow \infty}\int_{S_{\rho}}f\circ u(f\circ u)_rd\sigma^0_{\rho}=-\frac m2(n-2)\omega_{n-1}f(1)f'(1).$$
Thus:
$$\frac{2\alpha}{n-1}f(\alpha_1)|f'(\alpha_1)|\int_{\Sigma}H_0d\sigma\leq 
\frac{2}{n-1}\int_{\Sigma}f\circ u|f'\circ u|uH_0d\sigma\leq \frac m2\omega_{n-1}f(1)|f'(1)|.$$
By Theorem 5, part II:
$$C_0(\Sigma)\leq \frac 1{(n-1)\omega_{n-1}}\int_{\Sigma}H_0d\sigma_0.$$
Combining these two facts, we have:
$$2\alpha f(\alpha_1)|f'(\alpha_1)|C_0(\Sigma)\leq \frac m2f(1)|f'(1)|=\frac m2.$$
To finish the proof we need:
$$2\alpha f(\alpha_1)|f'(\alpha_1)|:=\mu>1.$$
We {\it claim} that, under the assumptions on the oscillation $\omega$, it is possible to find $f$
as in (14), satisfying also this condition.\\

{\it Proof of claim.} This is elementary. Note (i) $2-\alpha-\frac 1{2\alpha}>0$ iff $1-\sqrt{2}/2<\alpha<1+\sqrt{2}/2$; (ii) $1-\alpha/2>2-\alpha-\frac 1{2\alpha}$ for $\alpha>1$.\\

The convexity constraint (necessary and sufficient for existence of convex $f$) is:
$$|f'(\alpha_1)|<\lambda:=\frac{1-f(\alpha_1)}{\beta_1}<1,\mbox{ where }\beta_1=\alpha_1-1>0.$$
We must have, for some $\mu>1$ and $\lambda>0$:
$$\lambda <1, \quad |f'(\alpha_1)|=[2\alpha(1-\lambda\beta_1)]^{-1}\mu<\lambda.$$
Thus we need $\lambda$ to satisfy:
$$\lambda<1, \quad 2\alpha\lambda(1-\lambda\beta_1)>1.$$
The second inequality is equivalent to:
$$p(\lambda):=2\alpha \beta_1\lambda^2-2\alpha \lambda+1<0.$$
The discriminant of $p$ is $\Delta=4\alpha(\alpha-2\alpha_1+2)>0$,
from the hypothesis on the oscillation: $\omega=\alpha_1-\alpha<1-\frac{\alpha}2$.
The roots of $p(\lambda)$ are:
$$r_{\pm}=\frac 1{2\beta_1}(1\pm\sqrt{1-\frac{2\beta_1}{\alpha}}).$$
We need $r_{-}<1$, or equivalently:
$$1-2\beta_1<\sqrt{1-2\beta_1/\alpha}.$$
If $\alpha\geq 1+\sqrt{2}/2$, then certainly $\alpha_1>3/2$, or $\beta_1>1/2$ and we are done.\\

Otherwise, the condition needed is  equivalent to $2\alpha(1-\beta_1)>1$, or $\omega<2-\alpha-\frac 1{2\alpha}$. This concludes the proof of the claim.
\end{proof}

%%%%%%%%%%%%%%%%%%%%%%%%%%%%%%%%
%%%%%%%%%%%%%%%%%%%%%%%%%%%%%%%%

\begin{proof}[{\bf Proof of (V)}]  (This estimate is purely Euclidean, so
no approximation by metrics that are harmonically flat at infinity is required.)
Recall that a hypersurface $\Sigma=\partial \Omega\subset \R^n$ is called 
{\bf outer-minimizing}
if whenever $\Omega'$ is a domain with $\Omega'\supset\Omega$ then $|\partial \Omega'|\ge|\Sigma|$.  (An
example of such a hypersurface is given by the boundary of a collection of sufficiently far-apart convex bodies in $\mathbb{R}^n$.)  Let us denote by 
$|\Sigma_{t}|$ the area of the evolving hypersurface $\Sigma_{t}$ moving
 by IMCF with initial condition $\Sigma_{0}\equiv \Sigma$.  Then, by Lemma 1.4 of \cite{huiskenilmanen}, one 
 has $|\Sigma_t|=e^t|\Sigma|$ for all $t\geq 0$, provided $\Sigma$ is outer-minimizing.

Now, from Lemma \ref{imcf} and the fact that $e^{(\frac{n-2}{n-1})t}=(|\Sigma_t|/|\Sigma|)^{\frac{n-2}{n-1}}$, 
we have that the function
$$f(t):=|\Sigma_{t}|^{-\frac{n-2}{n-1}}\int_{\Sigma_{t}}Hd\sigma_{t}$$
is non-increasing along IMCF in $\mathbb{R}^n$.  By a known property of Euclidean IMCF, for $t$ large enough
 $\Sigma_{t}$ is arbitrarily close to a round sphere, and hence 
$f(t)\to(n-1)\omega_{n-1}^{1/(n-1)}$ as $t\to\infty$. This proves the inequality in (V), since
 $f(0)=|\Sigma|^{-(n-2)/(n-1)}\int_{\Sigma}Hd\sigma$.\\

\noindent
{\bf Rigidity of (V).}  From Remark \ref{sphere} it follows that the  inequality of part (V) is an equality whenever
$\Sigma$ is a round sphere.  On the other hand, if the inequality in (V) were an equality, we have
$f(\infty)= f(0),$ so $f(t)\equiv f(0)$
for all $t$ since $f$ is non-increasing.  This implies $\int_{\Sigma_{t}}Hd\sigma_{t}=ce^{t(n-2)/(n-1)},$  and
 inequality  \eqref{aleks}
becomes an equality.  Thus, we have reduced rigidity here to the case of rigidity of part (II).
\end{proof}

%%%%%%%%%%%%%%%%%%%%%%%%%%%%%%%%%%%%%%%%%%%%%
%%%%%%%%%%%%%%%%%%%%%%%%%%%%%%%%%%%%%%%%%%%%%

\section{Applications of the Main Theorem}

%%%%%%%%%%%%%%%%%%%%%%%%%%%%%%%%%%%%%%%%%%%%%
%%%%%%%%%%%%%%%%%%%%%%%%%%%%%%%%%%%%%%%%%%%%%

\begin{proof}[{\bf Proof of part (a) of Theorem \ref{main}}]  The inequality $C_g(\Sigma)\leq m$ follows immediately 
combining parts (I) , (II) and (III) (in case (i)), or parts (I) and (IV) (in case (ii)) of Theorem 5. \\

\end{proof}

\begin{proof}[{\bf Proof of part (b) of Theorem \ref{sch}}]  As observed in \cite{schwartz}, by spherical decreasing rearrangement
  the Euclidean capacity of $\partial \Omega$ is bounded from below by the capacity of a ball with the same 
  volume as $\Omega$. Namely, the ball of radius 
  $R=(V/\beta_n)^{1/n},\beta_n=vol_0(\mathbb{B}^n),V_0=vol_0(\Omega)$.  In other words, 
\begin{equation}\label{cap}
C_0(\Sigma)\geq \left(\frac{V_0}{\beta_n}\right)^{\frac{n-2}n}.
\end{equation}
On the other hand, part (a) of Theorem \ref{two} gives that $m\ge 2 C_0(\Sigma)$ (with strict inequality in case (ii)).  Together with \eqref{cap} this gives
$m\geq 2\left({V_0}/{\beta_n}\right)^{\frac{n-2}n},$
which is the claim of part (b) of Theorem \ref{sch}. \\ 

\end{proof}

\begin{proof}[{\bf Proof of part (c) of Theorem \ref{main}}]  Equality in part (a) implies we must have $\alpha=2$ and equality in Theorem 5(II), 5(III) and the second inequality of 5(I). Thus $\Sigma$ is a round sphere, $u\equiv 2$ on $\Sigma$ and $u$ is harmonic. It follows $g$ is Riemannian Schwarzschild.

In the same way, equality in part (b) implies $\alpha \geq 2$ and $C_0(\Sigma)=m/2$. By the case of equality in Theorem 5(II) and (III), it follows again that $\alpha=2$, $\Sigma$ is a sphere and $u$ is harmonic; hence $g$ is Riemannian Schwarzschild.\\

\end{proof}

\begin{proof}[{\bf Proof of Theorem \ref{afr}}] This is just parts (II) and (V) of Theorem
 \ref{two}. 
\end{proof}

\begin{remark} Combining theorem 1(b) and theorem 2(a), we find:
$$\frac 1{(n-1)\omega_{n-1}}\int_{\Sigma}H_0d\sigma_0\geq C_0(\Sigma)\geq \left(\frac{V_0}{\beta_n}\right)^{\frac{n-2}n}.$$
The resulting inequality between total mean curvature and volume is weaker than theorem 2(b) (via the isoperimetric inequality in $\mathbb{R}^n$). 
\end{remark}

\begin{remark} 
 The proof of the Riemannian Penrose inequality in \cite{bray01} involves 
the construction of a conformal flow of asymptotically flat Riemannian metrics $(g(t))_{t\geq 0}$ in the conformal 
class of the initial metric $g(0)$. It is crucial for the argument in \cite{bray01} that the ADM mass $m(t)$ of $g(t)$ be 
non-increasing in $t$. The proof of this is based on the relation (\cite{bray01}, section 7):
$$\frac d{dt}m(t)_{|t=t_0}=C_{g_{t_0}}(\Sigma(t_0))-m(t_0)$$
(using the normalization in the present paper for the capacity, and one-sided derivatives at the ``jump times''). 
Given this relation, the fact that $m(t)$ is non-increasing follows from the mass-capacity inequality obtained here
 for conformally flat metrics (independently of the positive mass theorem, or PMT), while in \cite{bray01} (for more
  general metrics, in dimension 3) it is obtained applying the reflection argument of \cite{masood} and the PMT. 
  (In fact, this is apparently the only place in \cite{bray01} where the PMT is needed.) Thus our  result of part (a)
  may 
  be regarded as evidence that the Riemannian Penrose inequality for conformally flat metrics in all dimensions 
  can be obtained from arguments of classical linear elliptic theory, 
as conjectured by Bray and Iga in \cite{brayiga}.\end{remark}

\begin{remark}  In his recent proof of the Penrose inequality for asymptotically flat graphs of functions,
M-K. G. Lam \cite{Lam11} uses the Aleksandrov-Fenchel inequality (Theorem 2(b)).  Our theorem can be used
{\it as-is} to strengthen Lam's result.  This is obtained by replacing 
``convex boundary components'' by  ``Euclidean mean-convex, outer minimizing boundary components'' in his
proof of the inequality.  (Or more precisely, by replacing Lemma 12  of \cite{Lam11} with Theorem 2(b).)
We obtain the following result: If $(M,g)$ is the graph of a smooth, asymptotically flat function 
$f:\mathbb{R}^n\setminus \Omega \rightarrow \mathbb{R}$ (as in \cite{Lam11}), and $\Sigma= f^{-1}(0)$ 
consists of  
Euclidean mean-convex, outer minimizing boundary components, then
$$m\geq \frac12\sum_{i=1}^k\left(\frac{|\Sigma_i|}{\omega_{n-1}}\right)^{\frac{n-2}{n-1}}.$$
\end{remark}

%\noindent
%{\bf Application of Theorem \ref{two}.} 
%Combining (I), (II) and (III) from Theorem \ref{two}  we conclude:
%$$C_g(\Sigma)\leq \frac m{\alpha}+\frac m2,$$
%with equality if and only if $g$ is the Riemannian Schwarzschild metric.
%Observe that  $C_g(\Sigma)$ will always be less than $\frac{3m}{2}$, and could be smaller
% than $m$, depending on $\alpha$. 
\vspace{.4cm}

{\it Appendix.} In this appendix we include the detailed argument for Lemma 8 for weak solutions of inverse mean curvature flow in $\mathbb{R}^n$. \vspace{.2cm}

{\bf Lemma A1.} Let $\Omega\subset \mathbb{R}^n$ be open and smoothly bounded, $u:\Omega^c\rightarrow \mathbb{R}_+$ a proper function of class $C^2$ with $u_{|\partial \Omega}\equiv 0$. 

Let $t>0$, $\Omega_t=\{u\leq t\}$ and $\Phi:(0,t)\rightarrow \mathbb{R}_+$ be Lipschitz, with compact support in $(0,t)$. Then with $\varphi=\Phi\circ u:\Omega_t\rightarrow \mathbb{R}$ we have:
$$-\int_{\Omega_t}\nabla \varphi\cdot \nu Hdx
=\int_{\Omega_t}\varphi (H^2-|A|^2)dx$$
where $\nu, H,A$ denote the unit outward normal, mean curvature and second fundamental form of the level sets of $u$.
\vspace{.2cm}

{\it Proof.} (In this proof $\nabla f$, $Hf$ and $\Delta f$ denote the standard gradient, Hessian and Laplace operators on $\mathbb{R}^n$.)\vspace{.2cm}

(i) Suppose first $u\in C^3$ and $\Phi\in C^1$.  Recall that by Sard's Theorem we have for a.e. $t>0$: the level set $\Sigma_t=\{u=t\}$ is regular, i.e. $\nabla u\neq 0$ on the $C^2$ submanifold $\Sigma_t$. Let $U\subset \Omega^c$ be the open subset where $\nabla u\neq 0$. In $U$, define $f=-(|\nabla u|)^{-1}$. Consider a $ C^2$ level set $\Sigma$ of $u$ with outward unit normal $\nu$. Then in $\Sigma \cap U$ we have the pointwise identity:
$$\nu\cdot \nabla H=|\nabla u|\Delta_{\Sigma}f-|A|^2$$
Here $\Delta_{\Sigma}$ is the Laplace-Beltrami operator of $\Sigma$ in the induced metric:
$$\Delta_{\Sigma}f:=\Delta f-Hf(\nu,\nu)-H\partial_{\nu}f.$$
In particular, since $\Sigma$ is a level set of $u$:
$$\Delta_{\Sigma}u=0,\quad \Delta u-Hu(\nu,\nu)=H\partial_{\nu}u=H|\nabla u|.$$
This is used in the calculation below to `reduce the order' of certain terms.\vspace{.2cm}

(i-a)  In general $Hu(e,e)=\langle \nabla^{\Sigma}_e\nabla^{\Sigma}u_{|\Sigma},e\rangle+(\partial_{\nu}u)\langle \nabla_{e}\nu,e\rangle$ for $e\in T\Sigma$, or:
$$Hu_{|T\Sigma}=H^{\Sigma}(u_{|\Sigma})+(\partial_{\nu}u)A,$$
with $H^{\Sigma}$ the Hessian for the induced Riemannian metric on $\Sigma$. In case $\Sigma$ is a level set of $u$: $Hu_{|T\Sigma}=(\partial_{\nu}u)A$. Thus (with $(e_i)$ any local o.n. frame for $T\Sigma$):
$$|Hu|^2=\sum_{i,j}Hu(e_i,e_j)^2+2\sum_iHu(e_i,\nu)^2+Hu(\nu,\nu)^2=|\nabla u|^2|A|^2+2|\nabla|\nabla u||^2-Hu(\nu,\nu)^2,$$
(using $\nabla |\nabla u|=Hu(\nu)$ for the self-adjoint Hessian operator $Hu$.) We conclude:
$$|\nabla u|^2|A|^2=|Hu|^2-2|\nabla |\nabla u||^2+Hu(\nu,\nu)^2=|Hu|^2-2H^2u(\nu,\nu)+Hu(\nu,\nu)^2$$
and observe that $|Hu|^2\geq H^2u(\nu,\nu)\geq H(\nu,\nu)^2$. Here:
$$ H^2u(\nu,\nu)=u_{ab}u_{bc}\nu^a\nu^c,Hu(\nu,\nu)^2=u_{ab}u_{cd}\nu^a\nu^b\nu^c\nu^d.$$\vspace{.2cm}

(i-b) By direct calculation we find:
$$|\nabla u|\nabla u\cdot \nabla H=\nabla u\cdot \nabla (\Delta u)-|\nabla u|D^3u(\nu,\nu,\nu)$$
$$-(\Delta u-Hu(\nu,\nu))-2H^2u(\nu,\nu)+2Hu(\nu,\nu)^2.$$
and
$$|\nabla u|^3\Delta_{\Sigma}f=\nabla u\cdot \nabla (\Delta u)-|\nabla u|D^3u(\nu,\nu,\nu)+|\nabla u|^2|A|^2$$
$$-H|\nabla u|Hu(\nu,\nu)-2(H^2u(\nu,\nu)-Hu(\nu,\nu)^2).$$
Thus, using $\Delta_{\Sigma}u=0$ we find:
$$|\nabla u|\nabla u\cdot \nabla H-|\nabla u|^3\Delta_{\Sigma}f+|\nabla u|^2|A|^2=0.$$
Dividing both sides by $|\nabla u|^2$ (in $\Sigma\cap U$) concludes the proof of (i).\vspace{.2cm}

(ii) For $\Sigma$ a regular level set of $u$ ($\nabla u\neq 0$ on $\Sigma$) we have from (i):
$$\int_{\Sigma}\frac 1{|\nabla u|}\nu\cdot \nabla Hd\sigma
=\int_{\Sigma} \Delta_{\Sigma}fd\sigma-\int_{\Sigma}\frac 1{|\nabla u|}|A|^2d\sigma,$$
where the first term on the right hand side vanishes. Since $\nabla u\neq 0$ on $\Sigma_s$ for a.e. $s>0$, the coarea formula  and (i) give:
$$\int_{\Omega_t}\varphi \nu\cdot \nabla Hdx=\int_0^t\Phi(s)(\int_{\Sigma_s}
\frac 1{|\nabla u|}\nu\cdot \nabla Hd\sigma_s)ds
=-\int_0^t\Phi(s)(\int_{\Sigma_s}\frac 1{|\nabla u|}|A|^2d\sigma_s)ds
=-\int_{\Omega_t}\varphi |A|^2dx.$$

\vspace{.2cm}

(iii) In the open set $U=\{\nabla u\neq 0\}$ we have, pointwise:
$$\mbox{div }(\varphi H \nu)=(\Phi'\circ u)|\nabla u|H +\varphi \nu\cdot \nabla H+\varphi H\mbox{div }\nu.$$
Now, if $H$ denotes the weak mean curvature (see \cite{huiskenilmanen} for definitions) we see that:
$$\int_{\Omega_t} \psi\mbox{div }\nu dx=\int_{\Omega_t} H\psi dx, \quad \forall \psi\in W^{1,2}_0(\Omega_t).$$
Indeed for the weak mean curvature we have on any regular level set $\Sigma$:
$$\int_{\Sigma}g \mbox{div}_{\Sigma}Xd\sigma=\int_{\Sigma}g H(X\cdot \nu) d\sigma,$$
for all smooth functions $g$ and smooth $\mathbb{R}^n$-valued vector fields $X$ on $\Sigma$, and by approximation also for $g\in W^{1,2}(\Sigma)$, $X\in L^2(\Sigma;\mathbb{}R^n)$ (since $H\in L^{\infty}(\Omega_t)$). In particular,  we may take $X=\nu$.

Here $\mbox{div}_{\Sigma}X=\sum_i\nabla_{e_i}X\cdot e_i$, while $\mbox{div }X=\partial_aX^a$, for $X=X^a\partial_a$ (a vector field in $\mathbb{R}^n$, in the standard basis), so for the unit normal
$\mbox{div }\nu=\mbox{div}_{\Sigma}\nu$ (pointwise on $U$, or in $W^{-1,2}$ in $\Omega_t$.) Now for $\psi\in W^{1,2}_0(\Omega_t)$ let $g=|\nabla u|\psi$ (note $|\nabla^{\Sigma}|\nabla u||\leq |Hu|$ on regular $\Sigma_s$, hence for a.e. $s$) and use the coarea formula:
$$\int_{\Omega_t} \psi\mbox{div }\nu dx=
\int_0^t(\int_{\Sigma_s}|\nabla u|\psi\mbox{div }\nu d\sigma_s)ds=
\int_0^t(\int_{\Sigma_s}|\nabla u|\psi Hd\sigma_s)ds=\int_{\Omega_t}\psi Hdx,$$
as asserted.\vspace{.2cm}

Since $\varphi H \in L^{\infty}\cap W^{1,2}_0$ (assuming $u\in C^3$), we conclude:
$$\mbox{div }\nu=H\mbox{ in }W^{-1,2}(\Omega_t), \mbox{ hence }H\varphi\mbox{div }\nu=\varphi H^2\mbox{ in }W^{-1,2}(\Omega_t).$$
This implies we have in the sense of distributions in $W^{-1,2}$:
$$\mbox{div }(\varphi H \nu)=(\Phi'\circ u)|\nabla u|H +\varphi \nu\cdot \nabla H+\varphi H^2.$$
Integration over $\Omega_t$ yields:
$$0=\int_{\Omega_t}(\Phi'\circ u)|\nabla u|Hdx+\int_{\Omega_t} \varphi\nu\cdot \nabla Hdx+\int_{\Omega_t}\varphi H^2dx.$$
Thus, using (ii) for the second term on the right:
$$-\int_{\Omega_t}\nabla \varphi\cdot \nu Hdx=-\int_{\Omega_t}\varphi |A|^2dx+\int_{\Omega_t}\varphi H^2dx,$$
as claimed in the statement of the lemma. By approximation, this is also true if $u\in C^2(\Omega^c)$ and
$\Phi$ is Lipschitz in $(0,t)$, with compact support.\vspace{.3cm}

{\it Remark.} The authors have been informed that the lemma has been known to G. Huisken for some time, but no proof has been published (cf. Theorem 6 in \cite{guanli}).

 \begin{bibdiv}
\begin{biblist}

\bib{bray01}{article}{
   author={Bray, Hubert L.},
   title={Proof of the Riemannian Penrose inequality using the positive mass
   theorem},
   journal={J. Differential Geom.},
   volume={59},
   date={2001},
   number={2},
   pages={177--267},
}

\bib{brayiga}{article}{
   author={Bray, H. L.},
   author={Iga, K.},
   title={Superharmonic functions in $\mathbb{R}^n$ and the Penrose
   inequality in general relativity},
   journal={Comm. Anal. Geom.},
   volume={10},
   date={2002},
   number={5},
   pages={999--1016},
   issn={1019-8385},
 
}

 \bib{braylee}{article}{
   author={Bray, H. L.},
   author={Lee, D. A.},
   title={On the Riemannian Penrose inequality in dimensions less than
   eight},
   journal={Duke Math. J.},
   volume={148},
   date={2009},
   number={1},
   pages={81--106},
   issn={0012-7094},

}

\bib{braymiao}{article}{
   author={Bray, H. L.},
   author={Miao, P.},
   title={On the capacity of surfaces in manifolds with nonnegative scalar
   curvature},
   journal={Invent. Math.},
   volume={172},
   date={2008},
   number={3},
   pages={459--475},
   issn={0020-9910},

}

\bib{masood}{article}{
   author={Bunting, Gary L.},
   author={Masood-ul-Alam, A. K. M.},
   title={Nonexistence of multiple black holes in asymptotically Euclidean
   static vacuum space-time},
   journal={Gen. Relativity Gravitation},
   volume={19},
   date={1987},
   number={2},
   pages={147--154},
   issn={0001-7701},
}

\bib{guanli}{article}{
   author={Guan, Pengfei},
   author={Li, Junfang},
   title={The quermassintegral inequalities for $k$-convex starshaped
   domains},
   journal={Adv. Math.},
   volume={221},
   date={2009},
   number={5},
   pages={1725--1732},
}

\bib{huiskenilmanen}{article}{
   author={Huisken, G.},
   author={Ilmanen, T.},
   title={The inverse mean curvature flow and the Riemannian Penrose
   inequality},
   journal={J. Differential Geom.},
   volume={59},
   date={2001},
   number={3},
   pages={353--437},
   issn={0022-040X},
}

\bib{hi08}{article}{
   author={Huisken, Gerhard},
   author={Ilmanen, Tom},
   title={Higher regularity of the inverse mean curvature flow},
   journal={J. Differential Geom.},
   volume={80},
   date={2008},
   number={3},
   pages={433--451},
}

\bib{Lam11}{article}{
    author={Lam, M-K. G.}
    title={The graph cases of the Riemannian positive mass and Penrose inequalities in all dimensions}
    journal={ArXiv}
    pages={101.4256v1[math.DG]}
    date={20 Oct 2010}
}

\bib{miao04}{article}{
   author={Miao, Pengzi},
   title={Asymptotically flat and scalar flat metrics on ${\mathbb{R}}^3$
   admitting a horizon},
   journal={Proc. Amer. Math. Soc.},
   volume={132},
   date={2004},
   number={1},
   pages={217--222},
}

\bib{szego}{book}{
   author={P{\'o}lya, G.},
   author={Szeg{\"o}, G.},
   title={Isoperimetric Inequalities in Mathematical Physics},
   series={Annals of Mathematics Studies, no. 27},
   publisher={Princeton University Press},
   place={Princeton, N. J.},
   date={1951},
   pages={xvi+279},
}

\bib{schoenyau}{article}{
   author={Schoen, R.},
   author={Yau, S.-T.},
   title={On the proof of the positive mass conjecture in general
   relativity},
   journal={Comm. Math. Phys.},
   volume={65},
   date={1979},
   number={1},
   pages={45--76},
   issn={0010-3616},
}

\bib{SY2}{article}{
   author={Schoen, Richard},
   author={Yau, Shing Tung},
   title={The energy and the linear momentum of space-times in general
   relativity},
   journal={Comm. Math. Phys.},
   volume={79},
   date={1981},
   number={1},
   pages={47--51},
  }

\bib{schwartz}{article}{
   author={Schwartz, Fernando},
    title={A Volumetric Penrose Inequality for Conformally Flat Manifolds},
   journal={Ann. Henri Poincar\'e},
   volume={12},
   date={2011},
   pages={67--76},
}

\bib{witten}{article}{
   author={Witten, E.},
   title={A new proof of the positive energy theorem},
   journal={Comm. Math. Phys.},
   volume={80},
   date={1981},
   number={3},
   pages={381--402},
   issn={0010-3616},
}

\end{biblist}
\end{bibdiv}

\end{document}